\definecolor{MyDarkBlue}{rgb}{0, 0.0, 0.45} 
\definecolor{MyDarkRed}{rgb}{0.45, 0.0, 0} 
\definecolor{MyDarkGreen}{rgb}{0, 0.45, 0} 
\definecolor{MyLightGray}{gray}{.90}
\definecolor{MyLightGreen}{rgb}{0.5, 0.99, 0.5}
\theoremstyle{plain}
\newtheorem{thm}{Theorem}[section]
\newtheorem*{theorem-non}{Theorem}
\newtheorem{prop}[thm]{Proposition}
\newtheorem{cor}[thm]{Corollary}
\theoremstyle{definition}
\theoremstyle{remark}
\newtheorem*{rem}{Remark}
\definecolor{dred}{rgb}{.65, 0, 0.15}
\def\<{\langle} \def\>{\rangle}
\def\dim{\mathrm{dim}}
\begin{document}

\title{Equivariant Euler characteristics of $\overline{\mathscr{M}}_{g,  n}$}	
\author{Adrian Diaconu \\         
}
\date{}	                
\maketitle
\begin{abstract} 
\noindent 
Let $\overline{\mathscr{M}}_{g, n}$ be the moduli space of $n$-pointed stable genus $g$ curves, and let 
$\mathscr{M}_{g, n}$ be the moduli space of $n$-pointed smooth curves of genus $g.$ In this paper, 
we obtain an asymptotic expansion for the characteristic of the free modular operad $\mathbb{M}\mathcal{V}$ 
generated by a stable $\mathbb{S}$-module $\mathcal{V},$ allowing to effectively compute $\mathbb{S}_{n}$-equivariant 
Euler characteristics of $\overline{\mathscr{M}}_{g, n}$ in terms of $\mathbb{S}_{n'}$-equivariant Euler characteristics of 
$\mathscr{M}_{g'\!, n'}$ with $0\le g' \le g,$ $\textrm{max}\{0, 3 - 2g' \} \le n' \le 2(g - g') + n.$ This answers a question posed by 
Getzler and Kapranov by making their integral representation of the characteristic of the modular operad $\mathbb{M}\mathcal{V}$ 
effective. To illustrate how the asymptotic expansion is used, we give formulas expressing the generating series of the $\mathbb{S}_{n}$-equivariant 
Euler characteristics of $\overline{\mathscr{M}}_{g, n},$ for $g = 0, 1$ and $2,$ in terms of the corresponding generating series associated with 
$\mathscr{M}_{g, n}.$ 
\end{abstract}

\vskip90pt
\tableofcontents 

\vskip121pt
\line(1,0){100}
\vskip-5pt
\noindent
{\small{School of Mathematics, University of Minnesota, Minneapolis, MN 55455\\
E-mail: cad@umn.edu}}

\newpage
\section{Introduction} 
Let $\mathcal{V}$ be a stable $\mathbb{S}$-module, i.e., a collection of chain complexes 
$\{\mathcal{V}(\!\!(g, n)\!\!)\!\}_{\scriptscriptstyle g, n \, \ge \, 0}$ with an action of the symmetric group $\mathbb{S}_{n}$ 
on $\mathcal{V}(\!\!(g, n)\!\!),$ and such that $\mathcal{V}(\!\!(g, n)\!\!) = 0$ when $2g + n - 2 \le 0,$ and let 
\begin{equation*}
\mathbb{C}\mathrm{h}(\mathcal{V}) \; = \sum_{2(g - 1) + n \, > \, 0} \hbar^{g - 1} \mathrm{ch}(\mathcal{V}(\!\!(g, n)\!\!)\!)
\end{equation*} 
denote the characteristic of $\mathcal{V},$ where $\mathrm{ch}(\mathcal{V}(\!\!(g, n)\!\!)\!)$ is the characteristic of 
the $\mathbb{S}_{n}$-representation $\mathcal{V}(\!\!(g, n)\!\!).$ To the stable $\mathbb{S}$-module $\mathcal{V},$ 
there is the associated free modular operad $\mathbb{M}\mathcal{V}$ generated by $\mathcal{V};$ as such, 
we can also take the characteristic of $\mathbb{M}\mathcal{V}.$  Throughout, we shall consider only $\mathbb{S}$-modules 
in the category of $\ell$-adic Galois representations.

The aim of this paper is to make quite effective a beautiful result of Getzler and Kapranov \cite[Theorem~8.13]{GK}, expressing the 
relationship between the characteristics of $\mathcal{V}$ and $\mathbb{M}\mathcal{V}$ in the form 
\begin{equation} \label{eq: Getz-Kapr1}
\mathbb{C}\mathrm{h}(\mathbb{M}\mathcal{V}) = 
\mathrm{Log}(\exp(\Delta)\mathrm{Exp}(\mathbb{C}\mathrm{h}(\mathcal{V})))
\end{equation} 
where $\Delta$ is a certain analogue of the Laplacian, $\mathrm{Exp}(f)$ is the {\it plethystic} exponential of $f,$ and 
$\mathrm{Log}(f)$ is the inverse of the plethystic exponential; see Section \ref{prelim} for precise definitions. The 
above equality is a consequence of how the functor $\mathbb{M}$ on the category of stable 
$\mathbb{S}$-modules is tailored to the way the boundary strata of $\overline{\mathscr{M}}_{g, n}$ are obtained 
by gluing together moduli spaces $\mathscr{M}_{g'\!, n'}.$ 

The formula \eqref{eq: Getz-Kapr1} is a natural generalization of Wick's theorem \cite{BIZ}, 
which gives the integral formula
\begin{equation} \label{eq: Wick} 
\sum_{2(g - 1) + n \, > \, 0} \mathrm{M}v_{g, n}\, \hbar^{g - 1} \frac{\xi^{n}}{n!}
= \, \log \!\int_{\mathbb{R}} \, \exp\bigg(\sum_{2(g - 1) + n \, > \, 0} v_{g, n} \,  \hbar^{g - 1} \frac{x^{n}}{n!}
\, - \, \frac{(x - \xi)^{2}}{2 \hbar}\bigg)\, 
\frac{dx}{\sqrt{2 \pi \hbar}}.
\end{equation} 
Here $\{v_{g, n} : 2(g - 1) + n  >  0\}$ is a set of variables, and 
\begin{equation*}
\mathrm{M}v_{g, n} : = \sum_{G\in \mathrm{Ob}\, \Gamma_{\! g, n}} \frac{1}{|\mathrm{Aut}(G)|}
\, \prod_{v \in \mathrm{Vert}(G)} v_{g(v), n(v)} \qquad \text{(for $2 (g - 1) + n >  0$)}
\end{equation*}
where $\Gamma_{\! g, n}$ is the {\it finite} category whose objects are isomorphism classes of stable graphs of genus 
$g$ with $n$ ordered legs, and whose morphisms are the automorphisms; see \cite{GK} for details. Thus if we define
\begin{equation*}
b_{\!\scriptscriptstyle g} = b_{\!\scriptscriptstyle g}(\xi) \; =  
\sum_{n \, \ge \, \textrm{max}\{0,\, 3 - 2g\}} \mathrm{M}v_{g, n}\, \frac{\xi^{n}}{n!} \;\;\; \mathrm{and} \;\;\;
a_{\scriptscriptstyle g} = a_{\scriptscriptstyle g}(x) \; =  
\sum_{n \, \ge \, \textrm{max}\{0,\, 3 - 2g\}} v_{g, n}\, \frac{x^{n}}{n!}
\end{equation*} 
then, \!by performing an asymptotic expansion of the integral in the right-hand side of \eqref{eq: Wick}, 
\!one can obtain formulas expressing the coefficients $b_{\!\scriptscriptstyle g}$ in terms of $a_{\scriptscriptstyle g'}$ 
with $g'  \le  g.$

Our main result, Theorem \ref{Main Theorem}, provides an asymptotic expansion of the right-hand side of 
\eqref{eq: Getz-Kapr1}, thus answering a question posed by Getzler and Kapranov in \cite{GK}, p. 113. 
Letting 
$
\mathrm{\bf  a}_{\scriptscriptstyle g}
$ 
and 
$
\mathrm{\bf  b}_{\! \scriptscriptstyle g}
$ 
denote, as above, the coefficients of $\hbar^{g - 1}$ in $\mathbb{C}\mathrm{h}(\mathcal{V})$ and 
$\mathbb{C}\mathrm{h}(\mathbb{M}\mathcal{V}),$ respectively, then as consequences of 
Theorem \ref{Main Theorem}, we shall obtain formulas for 
$
\mathrm{\bf  b}_{\scriptscriptstyle 0}, \mathrm{\bf  b}_{\scriptscriptstyle 1}
$ 
\!and 
$
\mathrm{\bf  b}_{\scriptscriptstyle 2}
$ 
in terms of  
$
\mathrm{\bf  a}_{\scriptscriptstyle 0}, \mathrm{\bf  a}_{\scriptscriptstyle 1}
$ 
\!and 
$
\mathrm{\bf  a}_{\scriptscriptstyle 2},
$ 
see Section \ref{equid-euler-examples}. The formulas for 
$
\mathrm{\bf  b}_{\scriptscriptstyle 0}
$ 
and 
$
\mathrm{\bf  b}_{\scriptscriptstyle 1} 
$ 
(Theorem \ref{Getzler1} and Theorem \ref{Getzler2}, respectively) are not new, see \cite[Theorem~7.17]{GK} or \cite[Theorem~5.9]{Getz1} for the calculation of 
$
\mathrm{\bf  b}_{\scriptscriptstyle 0},
$ 
and \cite{Getz2} or \cite{Peter1} for that of 
$
\mathrm{\bf  b}_{\scriptscriptstyle 1}.
$ 
The proofs of these two results were merely included as examples of how the coefficients 
$
\mathrm{\bf  b}_{\! \scriptscriptstyle g}
$ 
are calculated.

The argument used to prove Theorem \ref{Main Theorem} represents the natural generalization of the method 
used by Bini and Harer \cite[Section~3]{BH} to study the asymptotic expansion of the integral in \eqref{eq: Wick}. 
It is also possible to give an analogous interpretation of the asymptotic expansion discussed in this paper to that in 
\cite[Proposition~3.6]{BH}, as an expansion over stable graphs, to obtain explicit formulas for the coefficients 
$
\mathrm{\bf  b}_{\! \scriptscriptstyle g}
$ 
when $g \ge 3.$ The general formula for 
$
\mathrm{\bf  b}_{\! \scriptscriptstyle g}
$ 
can then be combined with a result of Gorsky \cite{Gors}, where the author establishes a formula for the 
generating series of the numerical $\mathbb{S}_{n}$-equivariant Euler characteristics of $\mathscr{M}_{g, n},$ 
to obtain the corresponding numerical $\mathbb{S}_{n}$-equivariant characteristics of 
$\overline{\mathscr{M}}_{g, n}.$

Although we shall work throughout just in the tensor symmetric abelian category 
$
\mathbf{Rep}_{\mathbb{Q}_{\scriptscriptstyle \ell}}(\mathrm{Gal}(\overline{\mathbb{Q}}\slash \mathbb{Q})) 
$ 
of $\ell$-adic Galois representations,\!\footnote{In fact, we shall only deal with virtual $\ell$-adic Galois representations (Euler characteristics); 
thus it will suffice to work instead with the characters of the representations at Frobenius elements.} 
the results in this paper are valid for stable $\mathbb{S}$-modules in any symmetric monoidal category 
with finite colimits and additive over a field of characteristic zero.

\vskip5pt
{\bf Acknowledgements.} I am grateful to Jonas Bergstr\"om and Dan Petersen for their comments and suggestions. 
I would also like to thank them for making available to me the formulas for the $\mathbb{S}_{n}$-equivariant Euler characteristics of $\overline{\mathscr{M}}_{2, n}$ for small values of $n.$

\section{Notation and preliminaries} \label{prelim}
{\bf Symmetric functions.} Let $\mathbb{S}_{k}$ denote the symmetric group on $k$ letters. 
The completed ring of symmetric functions in infinitely many variables (see \cite{GK}) is defined by    
\begin{equation*}
\Lambda = \varprojlim \mathbb{Z}[\!\![x_{\scriptscriptstyle 1}, \ldots, x_{\scriptscriptstyle k}]\!\!]^{\mathbb{S}_{k}}.
\end{equation*} 
We have the standard functions
\begin{equation*}
e_{\scriptscriptstyle n} \, = \sum_{i_{\scriptscriptstyle 1}  < \, \cdots \, < \, i_{\scriptscriptstyle n}} 
x_{\scriptscriptstyle i_{\scriptscriptstyle 1}} \! \cdots \, x_{\scriptscriptstyle i_{\scriptscriptstyle n}} 
\;\;\;\;\;\;
h_{\scriptscriptstyle n} \, = \sum_{i_{\scriptscriptstyle 1} \le \, \cdots \, \le \, i_{\scriptscriptstyle n}} 
x_{\scriptscriptstyle i_{\scriptscriptstyle 1}} \! \cdots \, x_{\scriptscriptstyle i_{\scriptscriptstyle n}} 
\;\;\mathrm{and}\;\;\;
p_{\scriptscriptstyle n} = \, \sum_{i \ge 1} x_{\scriptscriptstyle i_{}}^{\scriptstyle n} 
\end{equation*} 
called the elementary symmetric functions, the complete symmetric functions and the power sums, respectively. 
The ring $\Lambda$ is also the completion of the graded polynomial ring $\mathbb{Z}[e_{\scriptscriptstyle 1}, e_{\scriptscriptstyle 2}, \ldots ],$ 
and thus, by the well-known identities among the standard symmetric functions, we have  
\begin{equation*}
\Lambda = \mathbb{Z}[\!\![e_{\scriptscriptstyle 1}, e_{\scriptscriptstyle 2}, \ldots ]\!\!]
= \mathbb{Z}[\!\![h_{\scriptscriptstyle 1}, h_{\scriptscriptstyle 2}, \ldots ]\!\!]
\;\;\mathrm{and}\;\;
\Lambda \otimes_{\scriptscriptstyle \mathbb{Z}} \mathbb{Q}= \mathbb{Q}[\!\![p_{\scriptscriptstyle 1}, p_{\scriptscriptstyle 2}, \ldots ]\!\!].
\end{equation*} 
In addition, for 
$
\lambda =
(\lambda_{\scriptscriptstyle 1} \ge \cdots \ge \lambda_{\scriptscriptstyle n} > 0)
$ 
we have the Schur functions 
\begin{equation*}
s_{\scriptscriptstyle \lambda} = 
\mathrm{det}(h_{\scriptscriptstyle \lambda_{\scriptscriptstyle i} - \, i \, + \, j})_{\scriptscriptstyle 1\le i, j \le n} 
\end{equation*} 
which also generate $\Lambda.$

\vskip5pt
{\bf Plethysm.} One defines (see \cite{McD} and \cite{GK}) the associative operation $``\circ"$ on $\Lambda,$ 
called {\it plethysm} (or {\it composition}), characterized by the following conditions:
\begin{enumerate} 
\item 
$
(f_{1} + f_{2}) \circ g = f_{1} \circ g  + f_{2} \circ g
$
\item 
$
(f_{1}f_{2}) \circ g = (f_{1} \circ g)(f_{2} \circ g) 
$ 

\item If $f = f(p_{\scriptscriptstyle 1}, p_{\scriptscriptstyle 2}, \ldots),$ we have
$
p_{\scriptscriptstyle n} \circ f = f(p_{\scriptscriptstyle n}, p_{\scriptscriptstyle 2n}, \ldots).
$ 
\end{enumerate} 
In other words, $\Lambda$ is a $\lambda$-ring, see \cite{Kn} and \cite{Fu-La} for generalities on $\lambda$-rings; it is the 
complete filtered $\lambda$-ring obtained by completing the free $\lambda$-ring on one generator 
$
\mathbb{Z}[e_{\scriptscriptstyle 1}, e_{\scriptscriptstyle 2}, \ldots ].
$ 
In this language 
$
p_{\scriptscriptstyle n} \circ f = \psi_{\scriptscriptstyle n}(f), 
$ 
where $\psi_{\scriptscriptstyle n}$ denote the Adams operations.

Consider the ring $\Lambda(\!\!(\hbar)\!\!)$ of Laurent series with the topology induced by the descending filtration
\begin{equation*}
F^{n}\Lambda(\!\!(\hbar)\!\!) = \Big\{\sum_{i} f_{i}\hbar^{i} : f_{i} \in F^{n - 2i}\Lambda \Big\}.
\end{equation*}
The plethysm extends to $\Lambda \times \Lambda(\!\!(\hbar)\!\!)$ by keeping conditions 1., 2. above, 
and replacing the last condition by: 
\begin{equation*}
p_{\scriptscriptstyle n} \circ f(\hbar, p_{\scriptscriptstyle 1}, p_{\scriptscriptstyle 2}, \ldots) 
= f(\hbar^{n}\!, p_{\scriptscriptstyle n}, p_{\scriptscriptstyle 2n}, \ldots).
\end{equation*} 
For $f \in F^{1}\Lambda(\!\!(\hbar)\!\!),$ put 
$
\Psi(f) : = \sum_{n \, \ge \, 1} \frac{1}{n} \psi_{\scriptscriptstyle n}(f),
$ 
where $\psi_{\scriptscriptstyle n}(f) = p_{\scriptscriptstyle n} \circ f,$ 
and let $\mathrm{Exp}(f)  = \exp(\Psi(f));$ we have  
\begin{equation*}
\mathrm{Exp}(f + g) = \mathrm{Exp}(f)\mathrm{Exp}(g).
\end{equation*} 
The mapping 
$
\Psi : F^{1}\Lambda(\!\!(\hbar)\!\!) \to F^{1}\Lambda(\!\!(\hbar)\!\!)
$ 
has an inverse 
$
\Psi^{-1} : F^{1}\Lambda(\!\!(\hbar)\!\!) \to F^{1}\Lambda(\!\!(\hbar)\!\!)
$ 
given by 
\begin{equation*}
\Psi^{-1}(f) = \sum_{m \, \ge \, 1} \frac{\mu(m)}{m} \psi_{\scriptscriptstyle m}(f)
\end{equation*} 
where $\mu(m)$ is the usual M\"obius function; see \cite[Lemma~20]{Moz}. Thus if we define 
$\mathrm{Log} : 1 + F^{1}\Lambda(\!\!(\hbar)\!\!) \to F^{1}\Lambda(\!\!(\hbar)\!\!)$ by 
$\mathrm{Log}(g) = \Psi^{-1}(\log g),$ then $\mathrm{Exp}$ and $\mathrm{Log}$ are inverses of each other.

Finally, one defines (\cite[Section~8]{GK}) an analogue of the Laplacian on $\Lambda(\!\!(\hbar)\!\!)$ by  
\begin{equation*}
\Delta = \sum_{n = 1}^{\infty} \hbar^{n} \left(\frac{n}{2}\frac{\partial^{2}}{\partial p_{\scriptscriptstyle n}^{2}} + 
\frac{\partial^{}}{\partial p_{\scriptscriptstyle 2 n}^{}}\right).
\end{equation*} 
Note that $\Delta$ preserves the filtration of $\Lambda(\!\!(\hbar)\!\!).$

\vskip5pt
{\bf The moduli spaces $\mathscr{M}_{g, n}$ and $\overline{\mathscr{M}}_{g, n}.$} For $g, n \in \mathbb{N}$ 
with $2(g - 1) + n > 0,$ let $\overline{\mathscr{M}}_{g, n}$ denote the proper and smooth Deligne-Mumford stack 
of \emph{stable} curves of \emph{arithmetic} genus $g$ with $n$ ordered distinct smooth points. Let 
$\mathscr{M}_{g, n} \subset \overline{\mathscr{M}}_{g, n}$ be the open substack of irreducible and 
non-singular curves; both $\mathscr{M}_{g, n}$ and $\overline{\mathscr{M}}_{g, n}$ are defined over 
$\mathrm{Spec}(\mathbb{Z}),$ and the group $\mathbb{S}_{n}$ acts on them by permuting the marked 
points on the curves. Moreover, we know that the boundary $\overline{\mathscr{M}}_{g, n} \setminus \mathscr{M}_{g, n}$ 
is a normal crossings divisor.

The stack $\overline{\mathscr{M}}_{g, n}$ admits a stratification determined by stable graphs of genus $g$ 
with $n$ ordered legs; see \cite[Chap.~XII.10]{ACG}. To each stable graph $G,$ there corresponds 
the smooth, locally closed stratum $\mathscr{M}(G) \subset \overline{\mathscr{M}}_{g, n}$ 
parametrizing curves with dual graph isomorphic to $G.$ The stratum $\mathscr{M}(G)$ is canonically isomorphic to 
the quotient stack
\small{\[ 
\bigg[\bigg(\prod_{v \in \mathrm{Vert}(G)} \mathscr{M}_{g(v),\, n(v)}\bigg) \slash \mathrm{Aut}(G)\bigg].
\]} 
Here $\mathrm{Vert}(G)$ denotes the set of vertices of $G,$ $g(v)$ is the geometric genus of the component 
(of a stable curve) corresponding to $v$ and $n(v)$ is the valence of the vertex $v.$ The automorphism group 
$\mathrm{Aut}(G)$ of $G$ is the set of graph automorphisms preserving the genus function $g$ and the 
ordering of the legs.

For generalities on these moduli spaces, see \cite{DM}, \cite{Knud}, \cite{ACG} 
and \cite{HM}.

\vskip5pt
{\bf Euler characteristics.} Put 
$
G_{\scriptscriptstyle \mathbb{Q}} = \mathrm{Gal}(\overline{\mathbb{Q}}\slash \mathbb{Q}),
$
and for a prime $\ell,$ denote by 
$
\mathbf{Rep}_{\mathbb{Q}_{\scriptscriptstyle \ell}}\!(G_{\scriptscriptstyle \mathbb{Q}})
$ 
the abelian category of $\ell$-adic Galois repre-\\sentations of $G_{\scriptscriptstyle \mathbb{Q}}.$ Let 
$
\mathrm{K}_{0}(\mathbf{Rep}_{\mathbb{Q}_{\scriptscriptstyle \ell}}\!(G_{\scriptscriptstyle \mathbb{Q}}))
$ 
denote the Grothendieck ring of this category; it carries a natural $\lambda$-ring structure, 
cf. \cite[expos\'e 5]{SGA6} or \cite{Getz0}, the $\lambda$-operations enjoying the property: 
\begin{equation*} 
\lambda^{\! m}([V]) = [\wedge^{\! m} V] \;\, \text{for\, $m \ge 0$.}
\end{equation*} 
We also recall that a semi-simple $\ell$-adic Galois representation $V$ is determined by the traces 
$\mathrm{Tr}(\sigma_{\! \scriptscriptstyle p} \, \vert \, V)$ of Frobenius elements $\sigma_{\! \scriptscriptstyle p}$ 
on the primes $p$ at which $V$ is unramified (see, for instance, \cite[Proposition~2.6]{DDT}).

Let $\mathscr{M}_{g, n \scriptscriptstyle \slash \overline{\mathbb{Q}}}$ denote the $\overline{\mathbb{Q}}$-stack corresponding to $\mathscr{M}_{g, n},$ i.e., the generic fiber $\mathscr{M}_{g, n \scriptscriptstyle \slash \mathbb{Q}}$ 
of $\mathscr{M}_{g, n} \to \mathrm{Spec}(\mathbb{Z})$ base changed from $\mathbb{Q}$ to $\overline{\mathbb{Q}}.$ 
The action of the symmetric group $\mathbb{S}_{n}$ on $\mathscr{M}_{g, n}$ induces an isotypic decomposition of 
the $\ell$-adic cohomology 
$
H^{\scriptscriptstyle i}_{\scriptscriptstyle c}
(\mathscr{M}_{g, n \scriptscriptstyle \slash \overline{\mathbb{Q}}}, \mathbb{Q}_{\scriptscriptstyle \ell})
$ 
as an $\mathbb{S}_{n}$-module, 
\begin{equation*}
H^{\scriptscriptstyle i}_{\scriptscriptstyle c}
(\mathscr{M}_{g, n \scriptscriptstyle \slash \overline{\mathbb{Q}}}, \mathbb{Q}_{\scriptscriptstyle \ell}) \cong 
\bigoplus_{\lambda \, \vdash \, n} 
H^{\scriptscriptstyle i}_{\scriptscriptstyle c, \lambda}
(\mathscr{M}_{g, n \scriptscriptstyle \slash \overline{\mathbb{Q}}}, \mathbb{Q}_{\scriptscriptstyle \ell})
\end{equation*} 
where for an irreducible representation $V_{\scriptscriptstyle \lambda}$ of $\mathbb{S}_{n}$ indexed by the partition 
$\lambda$ of $n,$  
\begin{equation*}
H^{\scriptscriptstyle i}_{\scriptscriptstyle c, \lambda}
(\mathscr{M}_{g, n \scriptscriptstyle \slash \overline{\mathbb{Q}}}, \mathbb{Q}_{\scriptscriptstyle \ell}) 
= V_{\scriptscriptstyle \lambda} \otimes \mathrm{Hom}_{\scriptscriptstyle \mathbb{S}_{n}}
(V_{\scriptscriptstyle \lambda}, H^{\scriptscriptstyle i}_{\scriptscriptstyle c}
(\mathscr{M}_{g, n \scriptscriptstyle \slash \overline{\mathbb{Q}}}, \mathbb{Q}_{\scriptscriptstyle \ell})).
\end{equation*} 
For a partition $\lambda$ of $n,$ put 
\begin{equation*}
\mathbf{e}_{\scriptscriptstyle c, \lambda}(\mathscr{M}_{g, n \scriptscriptstyle \slash \overline{\mathbb{Q}}}) = 
\sum_{i}\,  (- 1)^{i}[H^{\scriptscriptstyle i}_{\scriptscriptstyle c, \lambda}
(\mathscr{M}_{g, n \scriptscriptstyle \slash \overline{\mathbb{Q}}}, \mathbb{Q}_{\scriptscriptstyle \ell})]
\in \mathrm{K}_{0}(\mathbf{Rep}_{\mathbb{Q}_{\scriptscriptstyle \ell}}\!(G_{\scriptscriptstyle \mathbb{Q}}));
\end{equation*} 
in addition, we fix throughout a finite field $\mathbb{F}$ of characteristic different from $\ell$ and an algebraic closure 
of it $\overline{\mathbb{F}},$ and define similarly the Euler characteristic 
$
\mathbf{e}_{\scriptscriptstyle c, \lambda}(\mathscr{M}_{g, n \scriptscriptstyle \slash \overline{\mathbb{F}}}). 
$ 
Let $\mathcal{V}(\!\!(g, n)\!\!) = H^{*}_{\scriptscriptstyle c}
(\mathscr{M}_{g, n \scriptscriptstyle \slash \overline{\mathbb{Q}}}, \mathbb{Q}_{\scriptscriptstyle \ell}),$ 
and set 
\begin{equation*} 
\mathrm{ch}_{\scriptscriptstyle n}(\mathcal{V}(\!\!(g, n)\!\!)\!) : =
\sum_{\lambda \, \vdash \, n} \frac{1}{\dim \, V_{\scriptscriptstyle \lambda}} 
\, \mathbf{e}_{\scriptscriptstyle c, \lambda}
(\mathscr{M}_{g, n \scriptscriptstyle \slash \overline{\mathbb{Q}}}) s_{\scriptscriptstyle \lambda}
\end{equation*} 
see also \cite{BeTo} where this characteristic is computed for small values of $g$ and $n.$

Letting 
$ 
F : \mathscr{M}_{g, n \scriptscriptstyle \slash \overline{\mathbb{F}}}\to 
\mathscr{M}_{g, n \scriptscriptstyle \slash \overline{\mathbb{F}}}
$ 
denote the Frobenius morphism, we note that by Grothendieck's fixed point formula \cite{Beh1, Beh2} 
\!\footnote{The smooth Deligne-Mumford stack $\mathscr{M}_{g, n}$ is of finite type and has 
relative dimension $3(g - 1) + n$ over $\mathrm{Spec}(\mathbb{Z}),$ thus fulfilling the conditions 
of \cite[Theorem~3.1.2]{Beh1}.}, the trace of the geometric Frobenius $F^{*}$ on the characteristic 
$\mathrm{ch}_{\scriptscriptstyle n}$ of the graded $\mathbb{S}_{n}$-module 
$
H^{*}_{\scriptscriptstyle c}
(\mathscr{M}_{g, n \scriptscriptstyle \slash \overline{\mathbb{F}}}, \mathbb{Q}_{\scriptscriptstyle \ell})
$ 
is given by 
\begin{equation*} 
t_{g, n}({\bf p}) = t_{g, n}(p_{\scriptscriptstyle 1}, p_{\scriptscriptstyle 2}, \ldots) : \, = 
\sum_{\lambda \, \vdash \, n} \frac{1}{\dim \, V_{\scriptscriptstyle \lambda}} 
\mathrm{Tr}(F^{*} \vert \, 
\mathbf{e}_{\scriptscriptstyle c, \lambda}(\mathscr{M}_{g, n \scriptscriptstyle \slash \overline{\mathbb{F}}})) 
s_{\scriptscriptstyle \lambda} = \frac{1}{n!}\sum_{\sigma \, \in \, \mathbb{S}_{\scriptscriptstyle n}} 
\left| \mathscr{M}_{g, n \scriptscriptstyle \slash \overline{\mathbb{F}}}^{\sigma F} \right | p_{\mathrm{c}(\sigma)}
\end{equation*} 
where $\mathrm{c}(\sigma)$ denotes the cycle type of $\sigma,$ and if 
$
\mathrm{c}(\sigma) = \left(1^{\rho(1)}\!, \ldots, n^{\rho(n)} \right)
$ 
then 
$
p_{\mathrm{c}(\sigma)} = p_{\scriptscriptstyle 1}^{\rho(1)} \cdots \, p_{\scriptscriptstyle n}^{\rho(n)}. 
$ 
Here we have also used Frobenius' formula 
\begin{equation*} 
s_{\scriptscriptstyle \lambda} = \frac{1}{n!}\sum_{\sigma \, \in \, \mathbb{S}_{\scriptscriptstyle n}} 
\chi^{\lambda}(\sigma) p_{\mathrm{c}(\sigma)}
\end{equation*} 
$\chi^{\lambda}$ being the character of $V_{\scriptscriptstyle \lambda}.$ Since 
$\left|\mathscr{M}_{g, n \scriptscriptstyle \slash \overline{\mathbb{F}}}^{\sigma F}\right|$ 
depends only upon the cycle type of $\sigma,$ we can also write 
\begin{equation*} 
t_{g, n}({\bf p}) \, = \sum_{\rho \, \vdash \, n} \, 
\big|\mathscr{M}_{g, n \scriptscriptstyle \slash \overline{\mathbb{F}}}^{\rho F}\big| 
\, \frac{p_{\scriptscriptstyle \rho}}{z_{\rho}}
\end{equation*} 
where, for convenience, we set 
$ 
\big|\mathscr{M}_{g, n \scriptscriptstyle \slash \overline{\mathbb{F}}}^{\rho F} \big| : = 
\left|\mathscr{M}_{g, n \scriptscriptstyle \slash \overline{\mathbb{F}}}^{\sigma F}\right |
$ 
for $\sigma \in \mathbb{S}_{\scriptscriptstyle n}$ with $\mathrm{c}(\sigma) = \rho,$ and 
\begin{equation*}
z_{\rho} = \prod_{i = 1}^{n} i^{\rho(i)} \rho(i)! \;\;\; 
\text{if $\rho = \big(1^{\rho(1)}\!, \ldots, n^{\rho(n)} \big)$.}
\end{equation*} 
Note that, for $k \ge 1,$ we have 
\begin{equation*} 
\psi_{\scriptscriptstyle k}^{}(t_{g, n})({\bf p}) \, = 
\sum_{\rho \, \vdash \, n} \, \big|\mathscr{M}_{g, n \scriptscriptstyle \slash \overline{\mathbb{F}}}^{\rho F^{k}} \big| 
\, \frac{p_{\scriptscriptstyle k \rho}}{z_{\rho}}.
\end{equation*} 
Now the characteristic of the stable $\mathbb{S}$-module $\mathcal{V} = \{\mathcal{V}(\!\!(g, n)\!\!)\!\}$ 
(that is, the representation $\mathcal{V}$ of the groupoid $\mathbb{S} \, = \coprod_{n\ge 0}\mathbb{S}_{n}$) 
is defined by the formal Laurent series 
\begin{equation*} 
\mathbb{C}\mathrm{h}(\mathcal{V}) \; = \sum_{2(g - 1) + n \, > \, 0}\hbar^{g - 1}
\mathrm{ch}_{\scriptscriptstyle n}(\mathcal{V}(\!\!(g, n)\!\!)\!)
\end{equation*} 
with the coefficients 
\begin{equation*} 
\sum_{n\, \ge \, \textrm{max}\{0,\, 3 - 2g\}}
\mathrm{ch}_{\scriptscriptstyle n}(\mathcal{V}(\!\!(g, n)\!\!)\!) \in 
\mathrm{K}_{0}(\mathbf{Rep}_{\mathbb{Q}_{\scriptscriptstyle \ell}}\!(G_{\scriptscriptstyle \mathbb{Q}}))
[\!\![h_{\scriptscriptstyle 1}, h_{\scriptscriptstyle 2}, \ldots ]\!\!].
\end{equation*} 
The corresponding generating series of $t_{g, n}({\bf p})$ will be denoted by $\mathrm{T}(\mathcal{V}).$

The free modular operad $\mathbb{M}\mathcal{V} = \{\mathbb{M}\mathcal{V}(\!\!(g, n)\!\!)\!\}$ 
generated by $\mathcal{V}$ is obtained by taking 
$
\mathbb{M}\mathcal{V}(\!\!(g, n)\!\!) = H^{*}_{\scriptscriptstyle c}
(\overline{\mathscr{M}}_{g, n \scriptscriptstyle \slash \overline{\mathbb{Q}}}, \mathbb{Q}_{\scriptscriptstyle \ell});
$ 
the characteristic $\mathbb{C}\mathrm{h}(\mathbb{M}\mathcal{V})$ and the corresponding generating 
series $\mathrm{T}(\mathbb{M}\mathcal{V})$ are defined as for $\mathcal{V}.$

The connection between the characteristics $\mathbb{C}\mathrm{h}(\mathcal{V})$ and 
$\mathbb{C}\mathrm{h}(\mathbb{M}\mathcal{V})$ is given by the following theorem of Getzler and Kapranov \cite[Theorem~8.13]{GK}:

\vskip10pt
\begin{thm}\label{Theorem GK1} --- We have 
\begin{equation*}
\mathbb{C}\mathrm{h}(\mathbb{M}\mathcal{V}) = 
\mathrm{Log}(\exp(\Delta)\mathrm{Exp}(\mathbb{C}\mathrm{h}(\mathcal{V}))).
\end{equation*}
\end{thm}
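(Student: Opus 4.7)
The plan is to establish the identity by expanding both sides as sums over stable graphs weighted by automorphism factors, and matching these expansions. The underlying principle is that the stratification of $\overline{\mathscr{M}}_{g,n}$ by stable dual graphs $G$, with stratum $[(\prod_{v \in \mathrm{Vert}(G)} \mathscr{M}_{g(v),n(v)})/\mathrm{Aut}(G)]$, translates after taking $\ell$-adic compactly-supported Euler characteristics into an identity of symmetric functions, precisely because Behrend's Lefschetz trace formula for stacks is additive over locally closed strata and compatible with Künneth.

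First, I would interpret $\mathrm{Exp}(\mathbb{C}\mathrm{h}(\mathcal{V}))$ as the generating series over \emph{ordered disjoint unions of corollas}, i.e.\ one-vertex stable "graphs" whose contributions are summed over the $\mathbb{S}$-module $\mathcal{V}$ with the standard plethystic-exponential symmetrization. Here the inverse $\mathrm{Log}\circ\mathrm{Exp} = \mathrm{id}$ correspondence between all/connected species will eventually account for graph connectivity. Second, I would show that $\exp(\Delta)$, applied to this disjoint-union series, produces the sum over \emph{all} stable graphs obtained by iteratively gluing legs in pairs. The two summands of $\Delta$ reflect the two gluing types appearing in the modular operad structure: $\tfrac{n}{2}\,\partial^{2}/\partial p_n^{2}$ glues two distinct legs (on the same or on different vertices) whose Frobenius-cycle label is $n$, with the factor $\tfrac{1}{2}$ accounting for the unordered pair of leg-endpoints and the $\hbar$-weight reflecting the genus increase by one per edge; the term $\partial/\partial p_{2n}$ glues two legs within a single $2n$-cycle to form a self-loop at a vertex. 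In both cases the contributions of $\exp(\Delta)=\sum_{k}\Delta^{k}/k!$ provide exactly the automorphism-symmetrization $1/|\mathrm{Aut}(G)|$ after one tracks the automorphisms of the edge-set on top of those of the underlying corolla configuration.

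Third, I would apply $\mathrm{Log}$, which by construction (as the plethystic inverse of $\mathrm{Exp}$) picks out the connected pieces of a disjoint-union generating series. Since the $\mathbb{S}_n$-equivariant Euler characteristic of a stratum $\mathscr{M}(G)\subset \overline{\mathscr{M}}_{g,n}$ is given by taking an invariant sum over $\mathrm{Aut}(G)$ of tensor products of $\mathrm{ch}_{n(v)}(\mathcal{V}(\!\!(g(v),n(v))\!\!))$'s glued along edges, summing over all connected stable $G$ of type $(g,n)$ yields exactly $\mathbb{C}\mathrm{h}(\mathbb{M}\mathcal{V})$; this matches the right-hand side after the above two steps.

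The hardest part will be verifying the automorphism-factor bookkeeping: one must check that the combinatorial factors arising from $\Delta^{k}/k!$, from the $1/n!$ in the plethystic exponential, and from the ordered/unordered-leg conventions built into $p_n\mapsto p_{kn}$ combine to produce precisely $1/|\mathrm{Aut}(G)|$, with $\mathrm{Aut}(G)$ understood as the group of graph automorphisms fixing the leg-ordering and preserving the genus function. A secondary subtlety is the passage from stacks over $\mathbb{Z}$ to the $\ell$-adic Galois representation category: one needs the closed/open stratum excision sequence for compactly supported cohomology and Künneth for Deligne-Mumford stacks (each of finite type over $\mathrm{Spec}(\mathbb{Z})$ of the appropriate relative dimension), which is where the cited Behrend framework enters to guarantee multiplicativity of $\mathrm{ch}_n$ across tensor factors and additivity across the graph stratification.
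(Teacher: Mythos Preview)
The paper does not give its own proof of this statement: Theorem~\ref{Theorem GK1} is simply quoted as \cite[Theorem~8.13]{GK}, with no argument supplied beyond the attribution. So there is nothing in the present paper to compare your proposal against.

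That said, your sketch is broadly the strategy of the original Getzler--Kapranov proof: interpret $\mathrm{Exp}(\mathbb{C}\mathrm{h}(\mathcal{V}))$ as the generating series for disjoint unions of corollas, show that $\exp(\Delta)$ implements all possible gluings of half-edges into edges (with the $\hbar$-weight tracking the resulting first Betti number), and then apply $\mathrm{Log}$ to pass from all stable graphs to connected ones. One point where your description is imprecise is the role of the term $\partial/\partial p_{2n}$: it does not specifically create ``a self-loop at a vertex'' but rather accounts for edges on which the graph automorphism acts by exchanging the two half-edges (an edge-flip), which in the cycle-index bookkeeping converts a $2n$-cycle on half-edges into an $n$-cycle on edges. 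The term $\tfrac{n}{2}\,\partial^{2}/\partial p_{n}^{2}$ handles edges whose half-edges lie in distinct $n$-cycles. Getting this distinction right is exactly the ``automorphism-factor bookkeeping'' you flag as the hardest part, and in \cite{GK} it is carried out by a direct computation of $\exp(\Delta)$ on products of power sums rather than by an informal gluing narrative. Your second subtlety (excision and K\"unneth for the stratification of $\overline{\mathscr{M}}_{g,n}$) is not part of the proof of this theorem at all: in \cite{GK} the identity is proved purely at the level of stable $\mathbb{S}$-modules and symmetric functions, and the identification $\mathbb{M}\mathcal{V}(\!\!(g,n)\!\!) = H^{*}_{c}(\overline{\mathscr{M}}_{g,n},\mathbb{Q}_{\ell})$ is a separate input, not something to be established inside this argument.
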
 
Here $\mathrm{Exp}(-)$ and $\mathrm{Log}(-)$ are defined as before.

\vskip5pt
{\bf Integral representation.} Following Getzler and Kapranov, we shall now express 
$\mathbb{C}\mathrm{h}(\mathbb{M}\mathcal{V})$ as a formal Fourier transform. 
\!The resul-\\ting formula is in complete analogy with the formula \eqref{eq: Wick} in Wick's theorem. 

For a partition $\rho = \big(1^{\rho(1)}\!, 2^{\rho(2)}\!, \ldots \big),$ where $\rho(j) = 0$ for all but finitely many $j,$ put, 
as before, 
$
p_{\scriptscriptstyle \rho} = p_{\scriptscriptstyle 1}^{\rho(1)} p_{\scriptscriptstyle 2}^{\rho(2)}\cdots. 
$ 
Let $\Lambda_{\mathrm{alg}}$ denote the space of {\it finite} linear combinations of the $p_{\scriptscriptstyle \rho}.$ 
On $\mathrm{Spec}(\Lambda_{\mathrm{alg}} \otimes \mathbb{R}) \cong \mathbb{R}^{\infty}$ with 
coordinates $p_{\scriptscriptstyle 1}, p_{\scriptscriptstyle 2}, \ldots,$ let 
$d\mu$ denote the formal Gaussian measure defined by 
\begin{equation*} 
\begin{split} 
d\mu  \; & = \prod_{n - \mathrm{odd}} 
e^{- p_{\scriptscriptstyle n}^{2}\slash 2 n \hbar^{\! n}}
\frac{dp_{\scriptscriptstyle n}}{\sqrt{2 \pi n \hbar^{n}}}
\, \prod_{n - \mathrm{even}} e^{- p_{\scriptscriptstyle n}^{2}\slash 2 n \hbar^{\! n} + \, 
p_{\scriptscriptstyle n}^{}\slash n \hbar^{\! n\slash 2}} 
\frac{dp_{\scriptscriptstyle n}}{e^{1\slash 2 n}\, \sqrt{2 \pi n \hbar^{n}}}\\
& = \mathrm{Exp}(- e_{\scriptscriptstyle 2} \slash \hbar) \prod_{n = 1}^{\infty} \frac{dp_{\scriptscriptstyle n}}
{e^{\varepsilon_{\! \scriptscriptstyle n} \slash 2 n}\, \sqrt{2 \pi n \hbar^{n}}}
\end{split}
\end{equation*} 
where $\varepsilon_{\! \scriptscriptstyle n} = 0$ or $1$ according as $n$ is odd or even. With this measure, 
for each monomial 
$
p_{\scriptscriptstyle \rho} = p_{\scriptscriptstyle 1}^{\rho(1)} p_{\scriptscriptstyle 2}^{\rho(2)}\cdots, 
$ 
define the integral  
\begin{equation*} 
\begin{split} 
\int_{\mathbb{R}^{\infty}}^{\! *} \, p_{\scriptscriptstyle \rho} \, d\mu(\mathbf{p})
\; & = \prod_{n - \mathrm{odd}} \int_{- \infty}^{\infty} p_{\scriptscriptstyle n}^{\rho(n)} 
e^{- p_{\scriptscriptstyle n}^{2}\slash 2 n \hbar^{\! n}}\frac{dp_{\scriptscriptstyle n}}{\sqrt{2 \pi n \hbar^{n}}}\\
& \cdot \prod_{n - \mathrm{even}} \int_{- \infty}^{\infty} p_{\scriptscriptstyle n}^{\rho(n)} 
e^{- p_{\scriptscriptstyle n}^{2}\slash 2 n \hbar^{\! n} + \, 
p_{\scriptscriptstyle n}^{}\slash n \hbar^{\! n\slash 2}} 
\frac{dp_{\scriptscriptstyle n}}{e^{1\slash 2 n}\, \sqrt{2 \pi n \hbar^{n}}}.
\end{split}
\end{equation*} 
Notice that almost all integrals in the products equal $1.$ We extend 
$
\int_{\mathbb{R}^{\infty}}^{\! *}
$ 
to a map on 
$
\mathrm{K}_{0}(\mathbf{Rep}_{\mathbb{Q}_{\scriptscriptstyle \ell}}\!(G_{\scriptscriptstyle \mathbb{Q}}))
[\!\![h_{\scriptscriptstyle 1}, h_{\scriptscriptstyle 2}, \ldots ]\!\!](\!\!(\hbar)\!\!) 
$ 
by linearity.

With this definition, we have the following interpretation of the formula in Theorem \ref{Theorem GK1}, see 
\cite[Theorem~8.18]{GK}:

\vskip10pt
\begin{thm}\label{Theorem GK2} --- We have 
\begin{equation} \label{eq: Getz-Kapr-original-integral-represent} 
\hbar^{- 1}h_{\scriptscriptstyle 2} + 
\mathbb{C}\mathrm{h}(\mathbb{M}\mathcal{V}) = 
\mathrm{Log}\left(\int_{\mathbb{R}^{\infty}}^{\! *} \mathrm{Exp}(\hbar^{- 1} p_{\scriptscriptstyle 1} q_{\scriptscriptstyle 1} 
+ \mathbb{C}\mathrm{h}(\mathcal{V}))\, d\mu(\mathbf{p}) \right)
\end{equation} 
where the left-hand side is considered as a function of 
$
\mathbf{q} = (q_{\scriptscriptstyle 1}, q_{\scriptscriptstyle 2}, \ldots) 
$ 
and $\hbar.$
\end{thm}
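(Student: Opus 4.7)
The plan is to reduce Theorem \ref{Theorem GK2} to Theorem \ref{Theorem GK1} by way of a plethystic Wick formula. The key lemma I would establish is that, for any $f$ in the appropriate filtered completion of $\Lambda_{\mathrm{alg}}$,
\begin{equation*}
\int_{\mathbb{R}^{\infty}}^{\! *} \mathrm{Exp}(\hbar^{-1} p_{\scriptscriptstyle 1} q_{\scriptscriptstyle 1})\, f(\mathbf{p})\, d\mu(\mathbf{p}) \; = \; \mathrm{Exp}(\hbar^{-1} h_{\scriptscriptstyle 2})(\mathbf{q}) \cdot \exp(\Delta_{\mathbf{q}}) f(\mathbf{q}).
\end{equation*}
Granting this, the theorem is immediate: apply the identity with $f = \mathrm{Exp}(\mathbb{C}\mathrm{h}(\mathcal{V}))$, use the multiplicativity $\mathrm{Exp}(A)\mathrm{Exp}(B) = \mathrm{Exp}(A+B)$ to combine the two plethystic exponentials inside the integral, and take $\mathrm{Log}$ of both sides. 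Theorem \ref{Theorem GK1} identifies $\mathrm{Log}(\exp(\Delta)\mathrm{Exp}(\mathbb{C}\mathrm{h}(\mathcal{V})))$ with $\mathbb{C}\mathrm{h}(\mathbb{M}\mathcal{V})$, and the $\mathrm{Log}\,\mathrm{Exp}(\hbar^{-1}h_{\scriptscriptstyle 2}) = \hbar^{-1}h_{\scriptscriptstyle 2}$ piece is precisely the shift on the left-hand side of the theorem.

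To prove the lemma I would work by linearity on the basis $\{p_{\scriptscriptstyle \rho}\}$ of $\Lambda_{\mathrm{alg}}$. Unwinding the plethystic exponential, $\mathrm{Exp}(\hbar^{-1} p_{\scriptscriptstyle 1} q_{\scriptscriptstyle 1}) = \exp\bigl(\sum_{n \ge 1} p_{\scriptscriptstyle n} q_{\scriptscriptstyle n} / (n\hbar^{n})\bigr)$, so the integrand factorizes over the coordinates $p_{\scriptscriptstyle n}$. For each $n$ I would complete the square about $m_{\scriptscriptstyle n} := \varepsilon_{\! \scriptscriptstyle n} \hbar^{n/2} + q_{\scriptscriptstyle n}$; the pure Gaussian in $p_{\scriptscriptstyle n} - m_{\scriptscriptstyle n}$ cancels against the normalizing factor $(2\pi n \hbar^{n})^{-1/2}$, leaving a prefactor $\exp\bigl(m_{\scriptscriptstyle n}^{2}/(2n\hbar^{n}) - \varepsilon_{\! \scriptscriptstyle n}/(2n)\bigr)$. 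Summing the exponents over $n$ and separating odd and even indices, the total prefactor simplifies to $\exp\bigl(\sum_{n} q_{\scriptscriptstyle n}^{2}/(2n\hbar^{n}) + \sum_{n\,\mathrm{even}} q_{\scriptscriptstyle n}/(n\hbar^{n/2})\bigr) = \exp(\Psi(\hbar^{-1} h_{\scriptscriptstyle 2})(\mathbf{q})) = \mathrm{Exp}(\hbar^{-1} h_{\scriptscriptstyle 2})(\mathbf{q})$, accounting for the shift.

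The remaining $p_{\scriptscriptstyle \rho}$-dependence of the integral is the product of the shifted Gaussian moments $\int p_{\scriptscriptstyle n}^{\rho(n)} (2\pi n \hbar^{n})^{-1/2} e^{-(p_{\scriptscriptstyle n} - m_{\scriptscriptstyle n})^{2}/(2n\hbar^{n})} dp_{\scriptscriptstyle n}$. By the standard heat-kernel representation, each such moment equals $\exp\bigl(\tfrac{n\hbar^{n}}{2}\partial_{q_{\scriptscriptstyle n}}^{2} + \varepsilon_{\! \scriptscriptstyle n}\hbar^{n/2}\partial_{q_{\scriptscriptstyle n}}\bigr)$ applied to $q_{\scriptscriptstyle n}^{\rho(n)}$, which is exactly the portion of $\Delta$ involving only $q_{\scriptscriptstyle n}$ (the quadratic term always, together with the linear term $\hbar^{n/2}\partial_{q_{\scriptscriptstyle n}}$ coming from the summand indexed by $n/2$ when $n$ is even). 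Taking the product over $n$ and noting that these one-variable operators commute reconstructs $\exp(\Delta_{\mathbf{q}}) q_{\scriptscriptstyle \rho}$, completing the proof of the lemma on monomials.

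The main obstacle I anticipate is justifying these formal manipulations in the correct topology, so that the interchange of an infinite product of Gaussian integrals with the infinite sums coming from the plethystic exponentials and with the scalar extension to $\mathrm{K}_{0}(\mathbf{Rep}_{\mathbb{Q}_{\scriptscriptstyle \ell}}\!(G_{\scriptscriptstyle \mathbb{Q}}))[\!\![h_{\scriptscriptstyle 1}, h_{\scriptscriptstyle 2}, \ldots]\!\!](\!\!(\hbar)\!\!)$ is legitimate. The filtration $F^{\bullet}\Lambda(\!\!(\hbar)\!\!)$ already at hand, which is preserved by $\Delta$, is the correct setting: modulo $F^{N}$ only finitely many of the variables $p_{\scriptscriptstyle n}, q_{\scriptscriptstyle n}$ and finitely many terms of the Taylor expansions of $\mathrm{Exp}(\hbar^{-1}p_{\scriptscriptstyle 1}q_{\scriptscriptstyle 1})$ and $\mathrm{Exp}(\mathbb{C}\mathrm{h}(\mathcal{V}))$ contribute, and the statement reduces to the finite-dimensional moment calculation above. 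The required compatibility with the $\hbar$-adic completion is then essentially bookkeeping.
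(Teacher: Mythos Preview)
Your argument is correct and is precisely the standard derivation. The paper itself does not prove Theorem~\ref{Theorem GK2}: it is quoted without proof from \cite[Theorem~8.18]{GK}, where the argument is essentially the one you have written---namely, the observation that the formal Gaussian integral $\int^{*}_{\mathbb{R}^{\infty}}(-)\,d\mu$ against the kernel $\mathrm{Exp}(\hbar^{-1}p_{1}q_{1})$ realizes the operator $\mathrm{Exp}(\hbar^{-1}h_{2})\cdot\exp(\Delta)$, proved by factoring over the coordinates $p_{n}$, completing the square, and recognizing each shifted Gaussian moment as the one-variable heat operator $\exp\bigl(\tfrac{n\hbar^{n}}{2}\partial_{q_{n}}^{2}+\varepsilon_{n}\hbar^{n/2}\partial_{q_{n}}\bigr)$ applied to $q_{n}^{\rho(n)}$. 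Your identification of the prefactor with $\mathrm{Exp}(\hbar^{-1}h_{2})$ via $\Psi(\hbar^{-1}h_{2})=\sum_{n}q_{n}^{2}/(2n\hbar^{n})+\sum_{n\ \mathrm{even}}q_{n}/(n\hbar^{n/2})$ and of the differential operator with $\Delta$ are both accurate, and the reduction to finitely many variables modulo $F^{N}$ is exactly how one makes the formal manipulations rigorous. There is nothing to add.
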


For our purposes it will be more convenient to switch the roles of 
$
\mathbf{p} = (p_{\scriptscriptstyle 1}, p_{\scriptscriptstyle 2}, \ldots) 
$ 
and 
$
\mathbf{q} = (q_{\scriptscriptstyle 1}, q_{\scriptscriptstyle 2}, \ldots), 
$ 
and write \eqref{eq: Getz-Kapr-original-integral-represent} in the form 
\begin{equation} \label{eq: Getz-Kapr-start-point-integral-representation} 
\mathrm{Exp}(\mathbb{C}\mathrm{h}(\mathbb{M}\mathcal{V})) = 
\int_{\mathbb{R}^{\infty}} e^{\mathcal{K}(\mathbf{p}, \mathbf{q}, \hbar)} \, d^{*}\!\mathbf{q}
\end{equation} 
with $\mathcal{K}(\mathbf{p}, \mathbf{q}, \hbar)$ defined by 
\begin{equation*} \label{eq: function-kappa} 
\mathcal{K}(\mathbf{p}, \mathbf{q}, \hbar) = 
- \sum_{m = 1}^{\infty} \left\{\left(q_{\scriptscriptstyle m} \! - p_{\scriptscriptstyle m} \! - \varepsilon_{\scriptscriptstyle m}\hbar^{\scriptscriptstyle m \slash 2} \right)^{2} \! \slash 2 m \hbar^{\scriptscriptstyle m} \! 
\right\} \, +\, \Psi(\mathbb{C}\mathrm{h}(\mathcal{V}))
\end{equation*} 
and the measure 
\begin{equation*} 
d^{*}\!{\mathbf{q}} \, = \prod_{m = 1}^{\infty} \frac{dq_{\scriptscriptstyle m}}{\sqrt{2 \pi m \hbar^{m}}}.
\end{equation*} 
Formula \eqref{eq: Getz-Kapr-start-point-integral-representation} follows easily by exponentiating 
\eqref{eq: Getz-Kapr-original-integral-represent} and applying the identities 
$e_{\scriptscriptstyle 2}^{} \! = (q_{\scriptscriptstyle 1}^{2} - q_{\scriptscriptstyle 2}^{}) \slash 2$ and 
$h_{\scriptscriptstyle 2}^{} \!= (p_{\scriptscriptstyle 1}^{2} + p_{\scriptscriptstyle 2}^{}) \slash 2.$

\section{A semi-classical expansion} 
To express the coefficients of $\mathbb{C}\mathrm{h}(\mathbb{M}\mathcal{V})$ in terms of those of 
$\mathbb{C}\mathrm{h}(\mathcal{V}),$ both $\mathbb{C}\mathrm{h}(\mathcal{V})$ and 
$\mathbb{C}\mathrm{h}(\mathbb{M}\mathcal{V})$ considered as formal Laurent series in $\hbar,$ we shall study 
the integral \eqref{eq: Getz-Kapr-start-point-integral-representation} corresponding to the generating series 
$\mathrm{T}(\mathcal{V})$ of $t_{g, n}({\bf p})$ over any finite field $\mathbb{F}$ of characteristic different from 
$\ell.$ Here, we recall that the Euler characteristics $\mathbf{e}_{\scriptscriptstyle c, \lambda}$ 
are elements of the Grothendieck ring 
$
\mathrm{K}_{0}(\mathbf{Rep}_{\mathbb{Q}_{\scriptscriptstyle \ell}}\!(G_{\scriptscriptstyle \mathbb{Q}})).
$

In what follows, we shall denote by $\mathrm{ch}_{\scriptscriptstyle g}(\mathcal{V})$ 
(resp. $\mathrm{ch}_{\scriptscriptstyle g}(\mathbb{M}\mathcal{V})$) the coefficient of $\hbar^{g - 1}$ 
in the series $\mathrm{T}(\mathcal{V})$ (resp. $\mathrm{T}(\mathbb{M}\mathcal{V})$), i.e., 
\begin{equation*} 
\mathrm{ch}_{\scriptscriptstyle g}(\mathcal{V})(\mathbf{q}) \; = 
\sum_{n\, \ge \, \textrm{max}\{0,\, 3 - 2g\}}\, \sum_{\rho \, \vdash \, n} \, 
\big|\mathscr{M}_{g, n \scriptscriptstyle \slash \overline{\mathbb{F}}}^{\rho F}\big| 
\, \frac{q_{\scriptscriptstyle \rho}}{z_{\rho}}
\end{equation*} 
and 
\begin{equation*}
\mathrm{ch}_{\scriptscriptstyle g}(\mathbb{M}\mathcal{V})(\mathbf{p}) \; = 
\sum_{n\, \ge \, \textrm{max}\{0,\, 3 - 2g\}}\, \sum_{\rho \, \vdash \, n} \, 
\big|\overline{\mathscr{M}}_{g, n \scriptscriptstyle \slash \overline{\mathbb{F}}}^{\rho F}\big| 
\, \frac{p_{\scriptscriptstyle \rho}}{z_{\rho}}.
\end{equation*} 
To make use of Theorem \ref{Theorem GK2} of Getzler and Kapranov in the form 
\eqref{eq: Getz-Kapr-start-point-integral-representation} to obtain formulas expressing 
the coefficients $\mathrm{ch}_{\scriptscriptstyle g}(\mathbb{M}\mathcal{V})$ in terms of the coefficients 
$\mathrm{ch}_{\scriptscriptstyle g'}(\mathcal{V})$ with $g' \le g,$ we shall perform a semi-classical expansion 
of the corresponding integral giving $T(\mathbb{M}\mathcal{V}).$ The analogous expansion in the context of the usual 
orbifold Euler characteristics $\chi(\mathscr{M}_{g, n})$ and $\chi(\overline{\mathscr{M}}_{g, n})$ is discussed 
by Bini and Harer in \cite[3.1]{BH}. Our arguments are a natural extension of theirs.

\subsection{The critical points} 
To apply the principle of stationary phase to the integral giving $T(\mathbb{M}\mathcal{V}),$ 
we first need to determine the critical points of the function (still denoted by $\mathcal{K}(\mathbf{p}, \mathbf{q}, \hbar)$) 
given by 
\begin{equation*} 
\mathcal{K}(\mathbf{p}, \mathbf{q}, \hbar) = 
- \sum_{m = 1}^{\infty} \left\{\left(q_{\scriptscriptstyle m} \! - p_{\scriptscriptstyle m} \! - \varepsilon_{\scriptscriptstyle m}\hbar^{\scriptscriptstyle m \slash 2} \right)^{2} \! \slash 2 m \hbar^{\scriptscriptstyle m} \! 
\right\} \, +\, \Psi(T(\mathcal{V})).
\end{equation*} 
More precisely, for 
$m = 1, 2, \ldots,$ we have to find the power series 
\begin{equation*} 
\bar{q}_{\scriptscriptstyle m}^{} \! = \bar{q}_{\scriptscriptstyle m}^{}\!(\mathbf{p}, \hbar) 
= \sum_{s = 0}^{\infty} \bar{c}_{m, s}^{}(\mathbf{p})\hbar^{s}
\end{equation*} 
satisfying the system of differential equations 
\begin{equation*} 
\frac{\partial \mathcal{K}}{\partial q_{\scriptscriptstyle m}^{}}(\mathbf{p}, \bar{\mathbf{q}}, \hbar) = 0
\qquad
\text{($\bar{\mathbf{q}} = (\bar{q}_{\scriptscriptstyle 1}^{}, \, \bar{q}_{\scriptscriptstyle 2}^{}, \ldots)$)}
\end{equation*} 
for $m = 1, 2, \ldots,$ or written explicitly, 
\begin{equation} \label{eq: barq-version1}
\bar{q}_{\scriptscriptstyle m}^{} \! = p_{\scriptscriptstyle m}^{}  
+  \varepsilon_{\scriptscriptstyle m}^{}\hbar^{m \slash 2}  +\, 
\sum_{k \mid m} \, \frac{m}{k} \, \sum_{g = 0}^{\infty}  \hbar^{kg + m - k} \,
\frac{\partial \psi_{\scriptscriptstyle k}^{}(\mathrm{ch}_{\scriptscriptstyle g}(\mathcal{V}))}
{\partial q_{\scriptscriptstyle m}^{}}(\bar{\mathbf{q}}).
\end{equation} 
Here $\psi_{\scriptscriptstyle k}^{}(\mathrm{ch}_{\scriptscriptstyle g}(\mathcal{V}))$ is given by 
\begin{equation*} 
\psi_{\scriptscriptstyle k}^{}(\mathrm{ch}_{\scriptscriptstyle g}(\mathcal{V}))(\mathbf{q}) \; = 
\sum_{n\, \ge \, \textrm{max}\{0,\, 3 - 2g \}}\, \sum_{\rho \, \vdash \, n} \, 
\big|\mathscr{M}_{g, n \scriptscriptstyle \slash \overline{\mathbb{F}}}^{\rho F^{k}}\big|\, 
\frac{q_{\scriptscriptstyle k \rho}}{z_{\rho}}. 
\end{equation*} 
For $m \ge 1,$ define $c_{\scriptscriptstyle m}^{}(\mathbf{q})$ by
\begin{equation*}
c_{\scriptscriptstyle m}^{}(\mathbf{q}) =  
q_{\scriptscriptstyle m}^{}  -  
\frac{\partial \psi_{\scriptscriptstyle m}^{}(\mathrm{ch}_{\scriptscriptstyle 0}(\mathcal{V}))}
{\partial q_{\scriptscriptstyle m}^{}}(\mathbf{q}).
\end{equation*} 
Note that $c_{\scriptscriptstyle m}^{} \! = \psi_{\scriptscriptstyle m}^{}(c_{\scriptscriptstyle 1}^{}).$ If we put 
$C(\mathbf{q}) = (c_{\scriptscriptstyle m}^{}(\mathbf{q}))_{\scriptscriptstyle m \ge 1}$ and 
$\bar{C}(\mathbf{p}) = (\bar{c}_{m, {\scriptscriptstyle 0}}^{}(\mathbf{p}))_{\scriptscriptstyle m \ge 1},$ 
then equating the constant terms (i.e., setting $\hbar = 0$) in both sides of \eqref{eq: barq-version1}, 
we must have
\begin{equation*}
C(\bar{C}(\mathbf{p})) = \mathbf{p}.
\end{equation*}

\vskip10pt
\begin{rem} \!\!\!In \cite[Lemma (2.8)]{KiLe}, Kisin and Lehrer obtained the formula
\begin{equation*}  
\frac{\big|\mathscr{M}_{{\scriptscriptstyle 0}, n \scriptscriptstyle \slash \overline{\mathbb{F}}}^{\rho F}\big|}
{z_{\rho}} = \frac{\prod_{i \ge 1} \binom{s_{i}(q) \slash i}{\rho(i)}}{q(q^{2} - 1)} \;\; \qquad \;\;  
\text{\big(with $\rho = \big(1^{\rho(1)}\!, \ldots,\, n^{\rho(n)}\big)$ and $|\mathbb{F}| = q$ \!\big)}
\end{equation*} 
where $s_{\scriptscriptstyle 1}^{}\!(q) = q + 1,$ and 
$ 
s_{\scriptscriptstyle i}^{}(q) = \sum_{d \mid i} \, \mu\left(\frac{i}{d}\right)q^{d} 
$ 
for $i \ge 2.$ Accordingly, we can write 
\begin{equation*} 
\mathrm{ch}_{\scriptscriptstyle 0}^{}(\mathcal{V})(\mathbf{p})  
= \frac{\left\{\prod_{n \ge 1}(1 + p_{\scriptscriptstyle n}^{})^{s_{\scriptscriptstyle n}^{}\!(q)\slash n} \right \}  -  1}{q(q^{2} - 1)} 
- \frac{p_{\scriptscriptstyle 1}^{2}}{2(q - 1)} - \frac{p_{\scriptscriptstyle 1}^{}}{q(q - 1)} 
- \frac{p_{\scriptscriptstyle 2}^{}}{2(q + 1)};
\end{equation*} 
see also \cite{Getz1}. Thus we can express  
\begin{equation} \label{eq: explicit c1}
c_{\scriptscriptstyle 1}^{}(\mathbf{p})  = p_{\scriptscriptstyle 1}^{}  
- \frac{\partial \mathrm{ch}_{\scriptscriptstyle 0}^{}(\mathcal{V})}{\partial p_{\scriptscriptstyle 1}^{}}(\mathbf{p}) 
= \frac{q \, p_{\scriptscriptstyle 1}^{}}{q - 1} \, - \, 
\frac{\left\{\prod_{n \ge 1}(1 + p_{\scriptscriptstyle n}^{})^{\!\frac{1}{n}\!\sum_{d \mid n} 
\mu\left(\! \frac{n}{d} \!\right)q^{d}}\right\} - 1}{q(q - 1)}.
\end{equation} 
\end{rem} 
Since $c_{\scriptscriptstyle 1}^{}(\mathbf{q})$ has no constant term, and the linear part of $C(\mathbf{q})$ 
is easily seen to be invertible, it follows that $C(\mathbf{q})$ admits, indeed, a compositional inverse. Note that 
\begin{equation*}
\psi_{\scriptscriptstyle m}^{}(\bar{c}_{\scriptscriptstyle 1, 0}^{})(C(\mathbf{p})) = p_{\scriptscriptstyle m}^{} \circ \,
\bar{c}_{\scriptscriptstyle 1, 0}^{}(C(\mathbf{p})) = p_{\scriptscriptstyle m}^{} 
\qquad  \text{(for all $m \ge 1$)}
\end{equation*} 
and thus $\bar{c}_{m, {\scriptscriptstyle 0}}^{} = \psi_{\scriptscriptstyle m}^{}(\bar{c}_{\scriptscriptstyle 1, 0}^{}),$ 
that is, $c_{\scriptscriptstyle 1}^{}$ and $\bar{c}_{\scriptscriptstyle 1, 0}^{}$ are {\it plethystic} inverses.

\vskip10pt
\begin{prop}\label{unique-formal-sol-barq} --- The system \eqref{eq: barq-version1} has a unique solution of the form 
\begin{equation*} 
\bar{q}_{\scriptscriptstyle m}^{}\!(\mathbf{p}, \hbar) = \bar{c}_{m, {\scriptscriptstyle 0}}^{}(\mathbf{p})  
\, +  \, O\big(\hbar^{(m + 1 - \varepsilon_{m}) \slash 2} \big) 
\end{equation*} 
for all $m\ge 1,$ with $\varepsilon_{m}^{} \! = 0$ or $1$ according as $m$ is odd or even. The coefficients 
$\bar{c}_{m,  m \slash 2}^{}$ of this solution are given by the formula
\begin{equation*}
\bar{c}_{m, m \slash 2}^{} = \frac{1  +  2 \psi_{\scriptscriptstyle \frac{m}{2}}^{}
\!\left(\frac{\partial \mathrm{ch}_{\scriptscriptstyle 0}^{}(\mathcal{V})}{\partial p_{\scriptscriptstyle 2}^{}} \!\right)}
{1  -  \psi_{\scriptscriptstyle m}^{}\!\left(\frac{\partial^{2}  \mathrm{ch}_{\scriptscriptstyle 0}^{}(\mathcal{V})}
{\partial p_{\scriptscriptstyle 1}^{2}} \!\right)}\, \circ \, \bar{c}_{\scriptscriptstyle 1, 0}^{}
\end{equation*} 
for all even $m \ge 2.$
\end{prop}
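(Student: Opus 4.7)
The plan is to solve the fixed-point system \eqref{eq: barq-version1} order by order in $\hbar$. At $\hbar = 0$ the equation reduces to $\bar{c}_{m,0} = p_m + \partial\psi_m(\mathrm{ch}_0(\mathcal{V}))/\partial q_m(\bar{\mathbf{c}}_0)$, equivalently $C(\bar{\mathbf{c}}_0) = \mathbf{p}$. Since $\mathrm{ch}_0(\mathcal{V})$ is supported in total degree $\geq 3$, one has $c_1(\mathbf{q}) = q_1 + (\text{terms of degree} \geq 2)$, so $c_1$ admits a plethystic inverse $\bar{c}_{1,0}$, and the unique solution is $\bar{c}_{m,0} = \psi_m(\bar{c}_{1,0})$. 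For $s \geq 1$, I would Taylor-expand the right-hand side of \eqref{eq: barq-version1} about $\bar{\mathbf{q}} = \bar{\mathbf{c}}_0$. Writing $N_m := \partial\psi_m(\mathrm{ch}_0)/\partial q_m$ and $M_{mj} := \partial N_m/\partial q_j(\bar{\mathbf{c}}_0)$, the coefficient of $\hbar^s$ gives a linear system
\begin{equation*}
(I - M)\,\bar{\mathbf{c}}_s \;=\; \mathbf{R}_s,
\end{equation*}
whose right-hand side depends only on $\bar{c}_{j,s'}$ with $s' < s$. The matrix $I - M$ is the Jacobian $DC|_{\bar{\mathbf{c}}_0}$, invertible as a matrix over the formal power series ring in $\mathbf{p}$ (its inverse is $D\bar{C}|_{\mathbf{p}}$). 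Hence the system has a unique solution, establishing existence and uniqueness.

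Next I would prove by strong induction on $s \geq 1$ that $\bar{c}_{m,s} = 0$ whenever $s < (m+1-\varepsilon_m)/2$, i.e., $m \geq 2s + 1$. Two observations are key: $M_{mj}$ vanishes unless $j$ is a multiple of $m$, because $\psi_m(\mathrm{ch}_0)(\mathbf{q}) = \mathrm{ch}_0(q_m, q_{2m}, \ldots)$; consequently the subsystem on indices $\{m : m \geq 2s+1\}$ closes. For such $m$, the explicit $\varepsilon_m \hbar^{m/2}$ term and the contributions indexed by divisors $k < m$ of $m$ live in $\hbar$-orders $\geq m/2 > s$, so they do not enter $\mathbf{R}_s$ at this order. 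The remaining multilinear pieces of $\mathbf{R}_s$ are products $\partial_{j_1}\cdots\partial_{j_r}N_m(\bar{\mathbf{c}}_0)\cdot \bar{c}_{j_1,s_1}\cdots\bar{c}_{j_r,s_r}$ with $\sum s_i = s$ and $s_i \geq 1$; by the inductive hypothesis, a nonzero factor $\bar{c}_{j_i,s_i}$ forces $j_i \leq 2s_i \leq 2s$, whereas the derivative coefficient vanishes unless every $j_i$ is a multiple of $m \geq 2s + 1$ --- a contradiction. Since the restricted $I - M$ remains invertible (its diagonal entry $1 - \psi_m(\partial^2\mathrm{ch}_0/\partial p_1^2)(\bar{\mathbf{c}}_0)$ is a unit in the formal power series ring), the closed system $(I-M)\bar{\mathbf{c}}_s = 0$ forces the claimed vanishing.

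Finally, for $m$ even, I would extract the $\hbar^{m/2}$-coefficient of both sides of \eqref{eq: barq-version1}. The explicit term contributes $1$. The only surviving summand over divisors $k < m$ of $m$ is $k = m/2$, which, using $\partial\psi_{m/2}(\mathrm{ch}_0)/\partial q_m = \psi_{m/2}(\partial\mathrm{ch}_0/\partial p_2)$, contributes $2\,\psi_{m/2}(\partial\mathrm{ch}_0/\partial p_2)(\bar{\mathbf{c}}_0)$. The $k = m, g = 0$ term, Taylor-expanded to first order, contributes $\psi_m(\partial^2\mathrm{ch}_0/\partial p_1^2)(\bar{\mathbf{c}}_0)\cdot \bar{c}_{m,m/2}$ from the diagonal $M_{mm}$; off-diagonal entries $M_{m,km}\bar{c}_{km,m/2}$ with $k \geq 2$ vanish by the previous step (as $km \geq 2m > 2(m/2) + 1$), and higher-order multilinear products vanish by the same index count. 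Contributions with $g \geq 1$ live in $\hbar^m$ or beyond. Solving the resulting scalar equation and identifying evaluation at $\bar{\mathbf{c}}_0$ with plethystic composition with $\bar{c}_{1,0}$ yields the stated formula. The main technical obstacle is the vanishing step: carefully tracking which mixed partials $\partial_{j_1}\cdots\partial_{j_r}N_m$ are nonzero (only when every $j_i$ is a multiple of $m$) and combining this with the inductive bound $j_i \leq 2s_i$ to eliminate all off-diagonal and multilinear couplings.
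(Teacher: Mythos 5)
Your overall strategy --- solving \eqref{eq: barq-version1} order by order in $\hbar$, proving the vanishing $\bar{c}_{m,s}=0$ for $m\ge 2s+1$ by the index count ``$j_i\le 2s_i$ versus $j_i$ a multiple of $m\ge 2s+1$'', and extracting the $\hbar^{m/2}$ coefficient for even $m$ --- is the paper's, and the computational content (the linear system with coefficients $\psi_m(\partial c_1/\partial p_r)(\bar C(\mathbf{p}))$, the vanishing of the inhomogeneous term for $m\ge 2s+1$, the scalar equation yielding $\bar c_{m,m/2}$) is correct. The gap is in the two appeals to infinite linear algebra. A two-sided matrix inverse of $I-M$ does not by itself give unique solvability of $(I-M)\mathbf{x}=\mathbf{R}_s$: the identity $(D\bar C)(DC)\mathbf{x}=\mathbf{x}$ requires rearranging an infinite double sum, and even the row sums $\sum_{r\ge 2}M_{m,mr}\,x_{mr}$ converge formally only under a decay hypothesis on the $x_{mr}$, since the constant term of $M_{m,mr}=\psi_m\bigl(\partial^2\mathrm{ch}_0/\partial p_1\partial p_r\bigr)(\bar C(\mathbf{p}))$ is nonzero for every $r$. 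More seriously, the claim that the closed homogeneous system on indices $m\ge 2s+1$ ``forces the claimed vanishing'' because the diagonal entries are units is false for infinite systems of this shape: each equation only expresses $x_m$ in terms of the $x_{mr}$ with $r\ge 2$, and with no condition at infinity such upper-triangular systems admit nontrivial solutions (compare $x_m-x_{2m}=0$, solved by any constant sequence). This is exactly the subtlety the paper isolates: its proof does not derive the vanishing from the homogeneous system but \emph{imposes} it --- the asserted form of the solution is equivalent to $\bar c_{m,s}=0$ for $m\ge 2s+1$, the equations in that range are checked to be consistent because $R_m=0$ there, and what survives at each order $s$ is the \emph{finite} triangular system for $1\le m\le 2s$, solved from $m=2s$ downward using that $\psi_m(\partial c_1/\partial p_1)(\bar C(\mathbf{p}))$ is a unit. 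Unconditional uniqueness for the infinite system is deliberately deferred to the Remark following the Proposition, where it is obtained only under a growth condition on the unknowns via von Koch's results.

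The repair is a reorganization rather than a new idea: restrict from the outset to candidate solutions of the asserted form, so that the vanishing is a hypothesis rather than a conclusion; use your (correct) computation that the inhomogeneous terms vanish for $m\ge 2s+1$ to see that setting $\bar c_{m,s}=0$ there satisfies those equations; and then solve the remaining finite triangular system for $1\le m\le 2s$, which gives existence and uniqueness within that class. Your final paragraph computing $\bar c_{m,m/2}$ then goes through verbatim.
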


\begin{proof} \!\!\!\!Splitting the right-hand side of \eqref{eq: barq-version1} according as $kg + m - k$ 
(the exponent of $\hbar$) is zero (i.e., $g = 0$ and $k = m$) or not, we rewrite \eqref{eq: barq-version1} as \begin{equation} \label{barq-version2}
c_{m}^{}(\bar{{\bf q}}) \, - \, p_{m}^{} =  \varepsilon_{m}^{}\hbar^{m \slash 2} \, + \, 
\sum\,  \bigg(\! \frac{m\, \hbar^{k(g - 1) + m}}{k} \!\bigg)
\frac{\partial \psi_{\scriptscriptstyle k}^{}(\mathrm{ch}_{\scriptscriptstyle g}(\mathcal{V}))}
{\partial q_{\scriptscriptstyle m}^{}}(\bar{\mathbf{q}}) 
\end{equation} 
the sum being over $k\mid m$ and $g\ge 0$ such that $k(g - 1) + m > 0.$ Equating the coefficients of $\hbar^{s}\!,$ 
for any $s \ge 1,$ on both sides of \eqref{barq-version2}, we get recursive relations of the form 
\begin{equation} \label{eq: system-determine-qn}
\sum_{r \ge 1} \bar{c}_{m r, s}^{}(\mathbf{p}) 
\psi_{\scriptscriptstyle m}^{}\!\left(\frac{\partial c_{\scriptscriptstyle 1}^{}}
{\partial p_{\scriptscriptstyle r}^{}}\right)\!(\bar{C}(\mathbf{p})) = R_{m}(\mathbf{p}) 
\qquad \text{(for all $m \ge 1$)}
\end{equation} 
where $R_{m}(\mathbf{p})$ is an expression involving only coefficients $\bar{c}_{t,  s'}(\mathbf{p})$ ($t \ge 1$) with 
$s' < s.$ Notice that the right-hand side of \eqref{barq-version2} is $O\big(\hbar^{(m + 1 - \varepsilon_{m}) \slash 2} \big)$ \!\footnote{The exponent $k(g - 1) + m$ of $\hbar$ is at least $(m + 1 - \varepsilon_{m}) \slash 2.$}; thus the coefficient of 
$\hbar^{s}$ in $c_{m}^{}(\bar{\mathbf{q}})$ must be zero when $m - \varepsilon_{m}^{} \ge 2s,$ that is, $m \ge 2s + 1.$

We proceed now by induction on $s.$ Having the coefficients 
$
\bar{c}_{m, \scriptscriptstyle 0}^{} = \psi_{\scriptscriptstyle m}^{}(\bar{c}_{\scriptscriptstyle 1, 0}^{})
$ 
already determined, assume that $\bar{c}_{m, s'}^{}(\mathbf{p}),$ for all $s' \!< s$ and $m \ge 1,$ were also 
determined, and that, for every $s' \! < s,$ we have $\bar{c}_{m, s'}^{} \! = 0$ for all $m \ge 2s' + 1.$ Under these assumptions, we have $R_{m}(\mathbf{p}) = 0$ if $m \ge 2s + 1.$ Indeed, the right-hand side of 
\eqref{barq-version2} is certainly $O(\hbar^{s + 1}),$ and the coefficients in 
$c_{\scriptscriptstyle m}^{}(\bar{\mathbf{q}})$ getting into $R_{m}(\mathbf{p})$ are of the form 
$\bar{c}_{mr, s'}^{}$ with $s'  \! < s;$ these are all zero, since $mr  > 2s' + 1.$ 
Let $\bar{c}_{m,  s}^{} : = 0$ for $m \ge 2s + 1,$ hence 
\eqref{eq: system-determine-qn} holds trivially for these values of $m.$ 
From \eqref{eq: explicit c1}, it is easy to see that 
$ 
\psi_{\scriptscriptstyle m}^{}\Big(\! \frac{\partial c_{\scriptscriptstyle 1}^{}}
{\partial p_{\scriptscriptstyle 1}^{}} \!\Big)(\bar{C}(\mathbf{p}))
$ 
is non-zero for all $m \ge 1,$ implying that the linear system
\begin{equation*}
\sum_{1 \le  r  \le \frac{2s}{m}} \bar{c}_{m r,  s}^{}(\mathbf{p}) 
\psi_{\scriptscriptstyle m}^{}\!\left(\! \frac{\partial c_{\scriptscriptstyle 1}^{}}
{\partial p_{\scriptscriptstyle r}^{}} \!\right)\!(\bar{C}(\mathbf{p})) = R_{m}(\mathbf{p}) 
\qquad \text{($1\le m \le 2s$)}
\end{equation*} 
has a unique solution; thus \eqref{eq: system-determine-qn} has a unique solution of the form we asserted. 
This completes the induction.

To compute the coefficients $\bar{c}_{m, m \slash 2}^{}(\mathbf{p}),$ we just notice that 
\eqref{eq: system-determine-qn} corresponding to $s = m \slash 2$ is 
\begin{equation*} 
\bar{c}_{m, m \slash 2}^{}(\mathbf{p}) \, - \, \bar{c}_{m,  m \slash 2}^{}(\mathbf{p}) 
\psi_{\scriptscriptstyle m}^{}\!\left(\frac{\partial^{2} 
\mathrm{ch}_{\scriptscriptstyle 0}^{}(\mathcal{V})}{\partial p_{\scriptscriptstyle 1}^{2}} \!\right)(\bar{C}(\mathbf{p})) 
= 1 \, + \, 2 \psi_{\scriptscriptstyle  \frac{m}{2}}^{}\!\left(\frac{\partial  \mathrm{ch}_{\scriptscriptstyle 0}^{}(\mathcal{V})}
{\partial p_{\scriptscriptstyle 2}^{}} \!\right)(\bar{C}(\mathbf{p})).
\end{equation*} 
Since 
$
\bar{c}_{r,  {\scriptscriptstyle 0}}^{} = \psi_{\scriptscriptstyle r}^{}(\bar{c}_{\scriptscriptstyle 1,  0}^{}),
$ 
for all $r \ge 1,$ it follows that 
\begin{equation*}
\bar{c}_{m,  m \slash 2}^{} = \frac{1  +  2 \psi_{\scriptscriptstyle \frac{m}{2}}^{}
\!\left(\frac{\partial \mathrm{ch}_{\scriptscriptstyle 0}^{}(\mathcal{V})}{\partial p_{\scriptscriptstyle 2}^{}} \!\right)}
{1  -  \psi_{\scriptscriptstyle m}^{}\!\left(\frac{\partial^{2}  \mathrm{ch}_{\scriptscriptstyle 0}^{}(\mathcal{V})}
{\partial p_{\scriptscriptstyle 1}^{2}} \!\right)}\, \circ \, \bar{c}_{\scriptscriptstyle 1, 0}^{}
\end{equation*} 
as claimed. This completes the proof. 
\end{proof}

\vskip10pt
\begin{rem} \!\!It is easy to see that the {\it infinite} homogeneous linear system associated to 
\eqref{eq: system-determine-qn} (with $m \ge 2s + 1 \ge 3$) has only the trivial solution 
if we assume that the unknowns $\bar{c}_{m r,  s}^{}(\mathbf{p})$ 
are subject to a growing condition. Indeed, rewriting \eqref{eq: explicit c1} as 
\begin{equation*}
\prod_{n \ge 1}(1 + p_{\scriptscriptstyle n}^{})^{\!\frac{1}{n}\!\sum_{d \mid n} 
\mu\left(\! \frac{n}{d} \!\right)q^{d}}  =\,  - \, q(q - 1)c_{\scriptscriptstyle 1}^{}(\mathbf{p}) 
+ q^{2} p_{\scriptscriptstyle 1}^{} \! + 1
\end{equation*} 
one finds easily that 
\begin{equation*}  
\psi_{\scriptscriptstyle m}^{}\!\left(\! \frac{\partial c_{\scriptscriptstyle 1}^{}}
{\partial p_{\scriptscriptstyle 1}^{}} \!\right)\!(\mathbf{p}) 
\,= \, \frac{1 + q^{m} c_{\scriptscriptstyle m}^{}(\mathbf{p})  -  q^{m} p_{\scriptscriptstyle m}^{}}
{1 + p_{\scriptscriptstyle m}^{}}
\;\;\;\;\, \text{and}  \;\;\;\;\;  
\psi_{\scriptscriptstyle m}^{}\!\left(\! \frac{\partial c_{\scriptscriptstyle 1}^{}}
{\partial p_{\scriptscriptstyle r}^{}} \!\right)\!(\mathbf{p})
\, = \, \frac{s_{\scriptscriptstyle r}^{}(q^{m}) (q^{m}(q^{m} - 1)c_{\scriptscriptstyle m}^{}(\mathbf{p}) 
-  q^{2 m} p_{\scriptscriptstyle m}^{}  -  1)}
{r q^{m} (q^{m} - 1) (1 + p_{\scriptscriptstyle m r}^{})} \;\;\;\; \text{(if $r\ge 2$).}
\end{equation*} 
Letting $|p_{\scriptscriptstyle n}^{}| < \delta^{n} \slash q^{n}$ ($n \ge 1$), for a fixed $0 < \delta < 1,$ we have that \begin{equation*}
\begin{split}
\left|\psi_{\scriptscriptstyle m}^{}\!\left(\! \frac{\partial c_{\scriptscriptstyle 1}^{}}
{\partial p_{\scriptscriptstyle r}^{}} \!\right)\!(\mathbf{p})\right|
\, & < \, \frac{q^{m r - m}}{r (q^{m} - 1)(1 - q^{- m r})}
(1 + q^{- m} \delta^{m})^{- 1} \cdot  
\prod_{n \ge 1}(1 + q^{- m n} \delta^{m n})^{s_{\scriptscriptstyle n}^{}\!\left(q^{m} \right)\slash n} \\
& = \, \frac{q^{m r - m}}{r (q^{m} - 1)(1 - q^{- m r})} \cdot \frac{1 - q^{- m} \delta^{2 m}}{1 - \delta^{m}}.
\end{split}
\end{equation*} 
Similarly, as long as $\delta$ is not too close to $1$ (for instance, one can take $\delta \le 1 - \frac{3}{q^{2 s + 1} - 1}$), 
we have the lower bound 
\begin{equation*}
\left|\psi_{\scriptscriptstyle m}^{}\!\left(\! \frac{\partial c_{\scriptscriptstyle 1}^{}}
{\partial p_{\scriptscriptstyle 1}^{}} \!\right)\!(\mathbf{p}) \right |
\, >  \frac{q^{m}}{q^{m} - 1}\left(1  - \frac{1 - q^{- m}\delta^{2m}}{q^{m}(1 - \delta^{m})(1 - q^{- m})}\right) \, >  0
\qquad \text{(for $m \ge 2s + 1$ and $r \ge 2$)}.
\end{equation*} 
It follows that, for $|p_{\scriptscriptstyle n}^{}| < \frac{1}{q^{n}}\big(1 - \frac{3}{q^{2 s + 1} - 1}\big)^{n},$ 
we have the estimate 
\begin{equation*} 
\left|\psi_{\scriptscriptstyle m}^{}\!\left(\! \frac{\partial c_{\scriptscriptstyle 1}^{}}
{\partial p_{\scriptscriptstyle r}^{}} \!\right)(\mathbf{p})\slash 
\psi_{\scriptscriptstyle m}^{}\!\left(\! \frac{\partial c_{\scriptscriptstyle 1}^{}}
{\partial p_{\scriptscriptstyle 1}^{}} \!\right)(\mathbf{p}) \right| 
\, < \, \frac{q^{m r - m}}{2 r} 
\qquad \text{($m \ge 2s + 1$ and $r \ge 2$).}
\end{equation*} 
Our assertion follows now -- for instance, from the classical results of von Koch \cite{Koch}; clearly 
this gives rise inductively to the solution $\bar{q}_{\scriptscriptstyle m}^{}(\mathbf{p}, \hbar)$ of 
\eqref{eq: barq-version1} in the proposition.
\end{rem}

\subsection{The asymptotic expansion} 
To obtain the asymptotic expansion of $T(\mathbb{M}\mathcal{V}),$ we first expand 
$\mathcal{K}(\mathbf{p}, \mathbf{q}, \hbar)$ as a power series centered at the unique solution 
$\bar{\mathbf{q}} = (\bar{q}_{\scriptscriptstyle 1}^{}, \, \bar{q}_{\scriptscriptstyle 2}^{}, \ldots)$ of \eqref{eq: barq-version1}. Thus, recalling that 
\begin{equation*} 
\mathcal{K}(\mathbf{p}, \mathbf{q}, \hbar) =  - \sum_{m = 1}^{\infty} 
\left\{\left(q_{\scriptscriptstyle m} \! - p_{\scriptscriptstyle m} \! - \varepsilon_{\scriptscriptstyle m} 
\hbar^{\scriptscriptstyle m \slash 2} \right)^{2} \! \slash 2 m \hbar^{\scriptscriptstyle m} \! \right\} \, +\,  \Psi(T(\mathcal{V}))
\end{equation*} 
we can express 
\begin{equation*} 
\mathcal{K}(\mathbf{p}, \mathbf{q}, \hbar) = \mathcal{K}(\mathbf{p}, \bar{\mathbf{q}}, \hbar) 
\, -  \sum_{m = 1}^{\infty} \frac{t_{\scriptscriptstyle m}^{2}}{2 m \hbar^{m}} 
\, + \, \sum_{|\boldsymbol{\alpha}| \ge 2} 
\partial^{\boldsymbol{\alpha}}\Psi(T(\mathcal{V}))(\bar{\mathbf{q}}, \hbar)
\frac{\text{{\bf t}}^{\boldsymbol{\alpha}}}{\boldsymbol{\alpha}!}
\;\; \qquad \;\;  \text{(with $t_{\scriptscriptstyle m}^{} \! = q_{\scriptscriptstyle m}^{} \! - \bar{q}_{\scriptscriptstyle m}^{}$ 
and $\text{{\bf t}} = {\bf q} - \bar{{\bf q}}$).}
\end{equation*} 
We set 
\begin{equation*} 
\mathcal{L}(\mathbf{p}, \mathbf{q}, \hbar) = \mathcal{L}(\mathbf{p}, \mathrm{{\bf t}}, \hbar) \; : =  
\sum_{|\boldsymbol{\alpha}| \ge 2} 
\partial^{\boldsymbol{\alpha}}\Psi(T(\mathcal{V}))(\bar{\mathbf{q}}, \hbar)
\frac{\mathrm{{\bf t}}^{\boldsymbol{\alpha}}}{\boldsymbol{\alpha}!} \, - \, \frac{1}{2}\sum_{m \ge 1}    
\frac{\partial^{2}\Psi(T(\mathcal{V}))}{\partial q_{\scriptscriptstyle m}^{2}}(\bar{\mathbf{q}}, \hbar) 
\, t_{\scriptscriptstyle m}^{2}
\end{equation*} 
so that  
\begin{equation*} 
\mathcal{K}(\mathbf{p}, \mathbf{q}, \hbar) =  
\mathcal{K}(\mathbf{p}, \bar{\mathbf{q}}, \hbar) 
\, -  \sum_{m = 1}^{\infty}  \left\{\left(1 - m \hbar^{m}
\frac{\partial^{2}\Psi(T(\mathcal{V}))}{\partial q_{\scriptscriptstyle m}^{2}}(\bar{\mathbf{q}}, \hbar)\right) 
\!\frac{t_{\scriptscriptstyle m}^{2}}{2 m \hbar^{m}} \right\} \,+\, \mathcal{L}(\mathbf{p}, \mathbf{q}, \hbar).
\end{equation*}

\vskip10pt
\begin{thm}\label{Main Theorem} --- For coordinates $t_{\scriptscriptstyle 1}^{}, \, t_{\scriptscriptstyle 2}^{}, \ldots$ in 
$\mathrm{Spec}(\Lambda_{\mathrm{alg}} \otimes \mathbb{R}) \cong \mathbb{R}^{\infty}\!,$ set 
\begin{equation*} 
\tilde{t}_{\scriptscriptstyle m}^{} : = \bigg(1 \,-\, m \hbar^{m} \frac{\partial^{2}\Psi(T(\mathcal{V}))}
{\partial q_{\scriptscriptstyle m}^{2}}(\bar{\mathbf{q}}, \hbar) \bigg)^{\! -1\slash 2}
\sqrt{2 m \hbar^{m}}\,  t_{\scriptscriptstyle m}^{}  \;\;\; \mathrm{and} \;\;\; 
\tilde{\mathrm{{\bf t}}} = (\tilde{t}_{\scriptscriptstyle 1}^{}, \, \tilde{t}_{\scriptscriptstyle 2}^{}, \ldots).
\end{equation*} 
If $d\nu(\mathrm{{\bf t}})$ denotes the formal Gaussian measure 
\begin{equation*} 
d\nu(\mathrm{{\bf t}}) = \prod_{m = 1}^{\infty} e^{- t_{\scriptscriptstyle m}^{2}} 
\, \frac{dt_{\scriptscriptstyle m}}{\sqrt{\pi}}
\end{equation*} 
we have 
\begin{equation} \label{eq: main identity}
\Psi(T(\mathbb{M}\mathcal{V})) = \mathcal{K}(\mathbf{p}, \bar{\mathbf{q}}, \hbar) \; - \; 
\frac{1}{2}\sum_{m = 1}^{\infty}\log\left(1 \, - \, m \hbar^{m} \frac{\partial^{2} 
\Psi(T(\mathcal{V}))}{\partial q_{\scriptscriptstyle m}^{2}}(\bar{\mathbf{q}}, \hbar) \right) 
\; + \; \log \left(\int_{\mathbb{R}^{\infty}}^{\! *} e^{\mathcal{L}(\mathbf{p}, \tilde{\mathrm{{\bf t}}}, \hbar)} \,
d\nu(\mathrm{{\bf t}}) \right).
\end{equation} 
Moreover, as $\hbar \rightarrow 0^{+}\!,$ we have the first-order asymptotic expansion 
\begin{equation*} 
\log \left(\int_{\mathbb{R}^{\infty}}^{\! *} e^{\mathcal{L}(\mathbf{p}, \tilde{\mathrm{{\bf t}}}, \hbar)} \, d\nu(\mathrm{{\bf t}}) \right) 
= \left(\frac{\left(\frac{\partial^{2} \mathrm{ch}_{\scriptscriptstyle 0}^{}}
{\partial q_{\scriptscriptstyle 1}^{} \! \partial q_{\scriptscriptstyle 2}^{}}(\bar{\mathbf{q}}) \right)^{\! 2}}
{\left(1 - \frac{\partial^{2} \mathrm{ch}_{\scriptscriptstyle 0}^{}}{\partial q_{\scriptscriptstyle 1}^{2}}(\bar{\mathbf{q}})\right)
\left(1 - \frac{\partial^{2} \psi_{\scriptscriptstyle 2}^{} \left(\mathrm{ch}_{\scriptscriptstyle 0}^{}\right)}
{\partial q_{\scriptscriptstyle 2}^{2}}(\bar{\mathbf{q}}) \right)} \, + \, 
\frac{5}{24}\frac{\left(\frac{\partial^{3} \mathrm{ch}_{\scriptscriptstyle 0}^{}}
{\partial q_{\scriptscriptstyle 1}^{3}}(\bar{\mathbf{q}}) \right)^{\! 2}}
{\left(1 - \frac{\partial^{2} \mathrm{ch}_{\scriptscriptstyle 0}^{}}
{\partial q_{\scriptscriptstyle 1}^{2}}(\bar{\mathbf{q}})\right)^{\! 3}}
\, + \, \frac{1}{8}\frac{\frac{\partial^{4} \mathrm{ch}_{\scriptscriptstyle 0}^{}}
{\partial q_{\scriptscriptstyle 1}^{4}}(\bar{\mathbf{q}})}
{\left(1 - \frac{\partial^{2} \mathrm{ch}_{\scriptscriptstyle 0}^{}}
{\partial q_{\scriptscriptstyle 1}^{2}}(\bar{\mathbf{q}}) \right)^{\! 2}}\! \right)\hbar \, +\, O(\hbar^{2})
\end{equation*} 
where 
$
\mathrm{ch}_{\scriptscriptstyle 0}^{}(\mathbf{q}) = 
\mathrm{ch}_{\scriptscriptstyle 0}^{}(\mathcal{V})(\mathbf{q}) = 
\sum_{n \ge 3} \sum_{\rho \, \vdash \, n}\, \big|\mathscr{M}_{{\scriptscriptstyle 0}, n \scriptscriptstyle \slash \overline{\mathbb{F}}}^{\rho F}\big| 
\frac{q_{\scriptscriptstyle \rho}}{z_{\rho}}.
$
\end{thm}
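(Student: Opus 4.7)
The strategy is to apply formal saddle-point analysis to the integral representation \eqref{eq: Getz-Kapr-start-point-integral-representation}. Since $\log\circ\mathrm{Exp} = \Psi,$ taking $\log$ of that identity gives $\Psi(T(\mathbb M\mathcal V)) = \log\int_{\mathbb{R}^\infty}e^{\mathcal K(\mathbf p,\mathbf q,\hbar)}\,d^*\mathbf q.$ By Proposition \ref{unique-formal-sol-barq}, the phase has a unique formal critical point $\bar{\mathbf q}(\mathbf p,\hbar),$ so I would Taylor-expand $\mathcal K$ around $\bar{\mathbf q}$ in $\mathbf t = \mathbf q - \bar{\mathbf q}.$ Criticality kills the linear terms, and, as in the display immediately before the theorem, the expansion splits into the $\mathbf t$-independent value $\mathcal K(\mathbf p,\bar{\mathbf q},\hbar),$ a diagonal quadratic weighted by $A_m := 1 - m\hbar^m\,\partial^2\Psi(T(\mathcal V))/\partial q_m^2(\bar{\mathbf q},\hbar),$ and the remainder $\mathcal L$ collecting the off-diagonal quadratic and higher-order Taylor contributions.

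To deduce the exact identity \eqref{eq: main identity}, I perform the change of variable $q_m - \bar q_m = A_m^{-1/2}\sqrt{2m\hbar^m}\,t_m,$ so that the original displacement $\mathbf q - \bar{\mathbf q}$ becomes the $\tilde{\mathbf t}$ of the statement. The Jacobian converts $d^*\mathbf q = \prod dq_m/\sqrt{2\pi m\hbar^m}$ into $\bigl(\prod_m A_m^{-1/2}\bigr)\,d\nu(\mathbf t),$ the diagonal quadratic exponent becomes $-\sum_m t_m^2,$ and the substitution turns $\mathcal L$ into $\mathcal L(\mathbf p,\tilde{\mathbf t},\hbar).$ Factoring the $\mathbf t$-independent piece out of the integral and taking $\log$ delivers \eqref{eq: main identity} directly.

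For the first-order expansion of $\log\int e^{\mathcal L(\mathbf p,\tilde{\mathbf t},\hbar)}\,d\nu(\mathbf t)$ as $\hbar\to 0^+,$ I would invoke the linked-cluster (Wick) theorem: the log of a Gaussian expectation of $e^{\mathcal L}$ equals the sum over connected Feynman graphs whose vertices are the Taylor coefficients $\partial^{\boldsymbol\alpha}\Psi(T(\mathcal V))(\bar{\mathbf q})/\boldsymbol\alpha!$ with $|\boldsymbol\alpha|\ge 2$ (diagonal quadratics already absorbed into the measure), joined by the propagator $\langle\tilde t_m\tilde t_n\rangle = A_m^{-1}m\hbar^m\,\delta_{mn}.$ Since each edge of type $m$ carries $\hbar^m$ and each vertex carries at least $\hbar^{-1}$ (through the leading $\hbar^{-1}\mathrm{ch}_0(\mathcal V)$ piece of $\Psi(T(\mathcal V))$), a connected graph with $V$ vertices and edges of types $m_1,\ldots,m_I$ sits at order $\hbar^{\sum_i m_i - V}$ or higher, so only finitely many topologies can contribute at order $\hbar^1.$ An enumeration (using $\sum v_i = 2I$ and the absence of linear or diagonal-quadratic vertices in $\mathcal L$) produces exactly three: a one-vertex quartic tadpole with two type-$1$ self-loops, giving $\tfrac{1}{8}(\partial^4\mathrm{ch}_0/\partial q_1^4)/A_1^2$ via $\langle\tilde t_1^4\rangle = 3A_1^{-2}\hbar^2$; a two-vertex ``bubble'' with two off-diagonal $\partial^2\mathrm{ch}_0/\partial q_1\partial q_2$ vertices joined by one edge each of types $1$ and $2,$ giving $(\partial^2\mathrm{ch}_0/\partial q_1\partial q_2)^2/(A_1A_2)$; and the aggregate of the two 2-vertex cubic topologies (the 3-parallel-edge ``sunset'' and the ``dumbbell'' of two self-loops plus one cross edge), giving $\tfrac{5}{24}(\partial^3\mathrm{ch}_0/\partial q_1^3)^2/A_1^3$ via $\tfrac{1}{2!}\langle\tilde t_1^6\rangle/(3!)^2.$ Substituting the $\hbar\to 0$ limits $A_1|_{\hbar=0} = 1 - \partial^2\mathrm{ch}_0/\partial q_1^2(\bar{\mathbf q})$ and $A_2|_{\hbar=0} = 1 - \partial^2\psi_2(\mathrm{ch}_0)/\partial q_2^2(\bar{\mathbf q})$ then reproduces the stated expansion.

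The main obstacle will be the combinatorial certification that the above enumeration is exhaustive at order $\hbar,$ i.e., that no other topology and no sub-leading vertex insertion (from the $g\ge 1$ or $k\ge 2$ pieces of $\Psi(T(\mathcal V))$) contributes at this order. Systematic $\hbar$-counting together with the valence and parity constraints on admissible Wick pairings handles the bookkeeping; this argument parallels and refines the numerical analysis in Bini and Harer \cite[Section~3]{BH}, the new ingredients here being the careful tracking of the plethystic $\psi_k$ substitutions arising from the definition of $\Psi(T(\mathcal V))$.
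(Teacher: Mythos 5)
Your proposal is correct and follows essentially the same route as the paper: the exact identity \eqref{eq: main identity} is obtained by the identical completion-of-the-square change of variables at the critical point $\bar{\mathbf q}$ of Proposition \ref{unique-formal-sol-barq}, and your connected-graph enumeration of the order-$\hbar$ term is just a diagrammatic repackaging of the paper's explicit expansion of $\mathcal{L}(\mathbf{p},\tilde{\mathbf{t}},\hbar)$ to $O(\hbar^{3/2})$ followed by term-by-term Gaussian integration, with all three coefficients ($1$, $\tfrac{5}{24}$, $\tfrac{1}{8}$) matching the paper's $\tfrac18 A_{1,2}^{2}+\tfrac{15}{16}A_{1^{3}}^{2}+\tfrac34 A_{1^{4}}$. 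The only caveat is that your power count ``each vertex carries at least $\hbar^{-1}$'' should read $\hbar^{-k}$ for the $\psi_k$ pieces (whose legs then carry types divisible by $k$, compensating as in the paper's $A_{2,4}$ and $A_{2^3}$ terms, which die by parity), but you flag exactly this bookkeeping as the point requiring care.
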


\begin{proof} \!By Theorem \ref{Theorem GK1}, and Theorem \ref{Theorem GK2} in the form 
\eqref{eq: Getz-Kapr-start-point-integral-representation}, we have 
\begin{equation*} 
\begin{split}
\mathrm{Exp}(T(\mathbb{M}\mathcal{V})) & 
= \exp(\Delta)\mathrm{Exp}(T(\mathcal{V}))\, = 
\int_{\mathbb{R}^{\infty}} \, e^{\mathcal{K}(\mathbf{p},  \mathbf{q},  \hbar)} \, d^{*} \!\mathbf{q} \\
& = e^{\mathcal{K}(\mathbf{p}, \bar{\mathbf{q}}, \hbar)}\!\! \int_{\mathbb{R}^{\infty}} 
e^{\mathcal{L}(\mathbf{p}, \mathrm{{\bf t}}, \hbar)} \exp\!\left[- \sum_{m = 1}^{\infty} \left(1 - m \hbar^{m}
\frac{\partial^{2} \Psi(T(\mathcal{V}))}{\partial q_{\scriptscriptstyle m}^{2}}(\bar{\mathbf{q}}, \hbar)\right) 
\!\frac{t_{\scriptscriptstyle m}^{2}}{2 m \hbar^{m}} \right] \, d^{*} \!\mathrm{{\bf t}}.
\end{split}
\end{equation*} 
Substituting $t_{\scriptscriptstyle m}$ by 
$
{\scriptstyle \big(1 \, - \, m \hbar^{m}
\frac{\partial^{2}\Psi(T(\mathcal{V}))}{\partial q_{\scriptscriptstyle m}^{2}}(\bar{\mathbf{q}}, \hbar) \big)^{\! -1\slash 2}} 
\sqrt{2 m h^{m}} \, t_{\scriptscriptstyle m} 
$ 
for all $m \ge 1,$ and then taking the logarithm, we find that 
\begin{equation*} 
\Psi(T(\mathbb{M}\mathcal{V})) = \mathcal{K}(\mathbf{p}, \bar{\mathbf{q}}, \hbar) \;  - \; 
\frac{1}{2}\sum_{m = 1}^{\infty} \log\left(1 \, - \, m \hbar^{m}\frac{\partial^{2}
\Psi(T(\mathcal{V}))}{\partial q_{\scriptscriptstyle m}^{2}}(\bar{\mathbf{q}}, \hbar) \right) 
\; + \; \log \left(\int_{\mathbb{R}^{\infty}}^{\! *} 
e^{\mathcal{L}(\mathbf{p}, \tilde{\mathrm{{\bf t}}}, \hbar)} \, d\nu(\mathrm{{\bf t}}) \right).
\end{equation*} 
This is the formula for $\Psi(T(\mathbb{M}\mathcal{V}))$ stated in our theorem.

Now let $\boldsymbol{\alpha} = (\alpha_{\text{$\scriptstyle 1$}}, \ldots, \alpha_{s})$ be a multi-index with 
$|\boldsymbol{\alpha}| \ge 2,$ and put 
$\tilde{\boldsymbol{\alpha}} : = (1^{\! \alpha_{\text{$\scriptscriptstyle 1$}}}\!, \ldots, s^{\alpha_{s}}).$ It is clear that when 
computing the partial derivative $\partial^{\boldsymbol{\alpha}}\Psi(T(\mathcal{V}))(\bar{\mathbf{q}}, \hbar),$ the only 
terms 
\begin{equation*} 
\frac{\hbar^{k(g - 1)}}{k}\big|\mathscr{M}_{g, n \scriptscriptstyle \slash \overline{\mathbb{F}}}^{\rho F^{k}}\big| 
\frac{q_{\scriptscriptstyle k \rho}}{z_{\rho}}
\end{equation*} 
in $\Psi(T(\mathcal{V}))$ contributing nontrivially are among those corresponding to $k$'s 
dividing $\tilde{\boldsymbol{\alpha}}$ (i.e., $k$ divides every part of $\tilde{\boldsymbol{\alpha}}$). Using this, 
one finds easily that 
\begin{equation*}
\begin{split}  
\mathcal{L}(\mathbf{p}, \tilde{\mathrm{{\bf t}}}, \hbar) =\, 
& \frac{1}{2 \hbar^{2}}\!\left\{\frac{\partial^{2} (q_{\scriptscriptstyle 2}^{} \! \circ \mathrm{ch}_{\scriptscriptstyle 0}^{}(\mathcal{V}))}{\partial q_{\scriptscriptstyle 2}^{}  \partial q_{\scriptscriptstyle 4}^{}}(\bar{\mathbf{q}})
\tilde{t}_{\scriptscriptstyle 2}^{}\tilde{t}_{\scriptscriptstyle 4}^{}  +  
\frac{\partial^{3} (q_{\scriptscriptstyle 2}^{} \! \circ \mathrm{ch}_{\scriptscriptstyle 0}^{}(\mathcal{V}))}
{\partial q_{\scriptscriptstyle 2}^{3}}(\bar{\mathbf{q}}) \frac{\tilde{t}_{\scriptscriptstyle 2}^{\, 3}}{6} \right\} \, 
+ \, \frac{1}{\hbar}\!\left\{\frac{\partial^{2}\mathrm{ch}_{\scriptscriptstyle 0}^{}(\mathcal{V})}
{\partial q_{\scriptscriptstyle 1}^{} \partial q_{\scriptscriptstyle 2}^{}}
(\bar{\mathbf{q}}){\tilde{t}_{\scriptscriptstyle 1}}^{} {\tilde{t}_{\scriptscriptstyle 2}}^{}  + 
\frac{\partial^{3}\mathrm{ch}_{\scriptscriptstyle 0}^{}(\mathcal{V})}{\partial q_{\scriptscriptstyle 1}^{3}}(\bar{\mathbf{q}})
\frac{{\tilde{t}_{\scriptscriptstyle 1}}^{\, 3}}{6} \right\}\\ 
& \hskip90pt + \, \frac{1}{\hbar}\!\left\{\frac{\partial^{2}\mathrm{ch}_{\scriptscriptstyle 0}^{}(\mathcal{V})}
{\partial q_{\scriptscriptstyle 1}^{} \partial q_{\scriptscriptstyle 3}^{}}(\bar{\mathbf{q}})
{\tilde{t}_{\scriptscriptstyle 1}}^{}  {\tilde{t}_{\scriptscriptstyle 3}}^{} 
+ \frac{\partial^{3} \mathrm{ch}_{\scriptscriptstyle 0}^{}(\mathcal{V})}
{\partial q_{\scriptscriptstyle 1}^{2} \partial q_{\scriptscriptstyle 2}^{}}(\bar{\mathbf{q}})
\frac{{\tilde{t}_{\scriptscriptstyle 1}}^{\, 2} {\tilde{t}_{\scriptscriptstyle 2}}^{}}{2} + 
\frac{\partial^{4}\mathrm{ch}_{\scriptscriptstyle 0}^{}(\mathcal{V})}{\partial q_{\scriptscriptstyle 1}^{4}}(\bar{\mathbf{q}})
\frac{{\tilde{t}_{\scriptscriptstyle 1}}^{\, 4}}{24} \right\} \, +\, O(\hbar^{3 \slash 2});
\end{split}
\end{equation*} 
it is also easy to see that 
\begin{equation} \label{eq: tag basic-asymptotic}
\begin{split}
&\left(1 - m \hbar^{m} \frac{\partial^{2}\Psi(T(\mathcal{V}))}{\partial q_{\scriptscriptstyle m}^{2}}
(\bar{\mathbf{q}}, \hbar) \right)^{\! - 1\slash 2} \\
& \hskip51pt = \left(1 - \frac{\partial^{2} \psi_{\scriptscriptstyle m}^{}(\mathrm{ch}_{\scriptscriptstyle 0}^{}(\mathcal{V}))}
{\partial q_{\scriptscriptstyle m}^{2}}(\bar{\mathbf{q}})\right)^{-1\slash 2}
\left(1 + \frac{\delta_{m, 1}\frac{\partial^{2} \mathrm{ch}_{\scriptscriptstyle 1}^{}\!(\mathcal{V})}
{\partial q_{\scriptscriptstyle 1}^{2}}(\bar{\mathbf{q}}) 
+ 2\delta_{m, 2} \frac{\partial^{2} \mathrm{ch}_{\scriptscriptstyle 0}^{}(\mathcal{V})}
{\partial q_{\scriptscriptstyle 2}^{2}}(\bar{\mathbf{q}})}
{1 - \frac{\partial^{2} \psi_{\scriptscriptstyle m}^{}(\mathrm{ch}_{\scriptscriptstyle 0}^{}(\mathcal{V}))}
{\partial q_{\scriptscriptstyle m}^{2}}(\bar{\mathbf{q}})}\frac{\hbar}{2} \right)\, +\, O(\hbar^{2}).
\end{split}
\end{equation} 
Here $\delta_{m, j}$ is the Kronecker delta. Replacing $\tilde{t}_{\scriptscriptstyle 1}^{},\,  \tilde{t}_{\scriptscriptstyle 2}^{},
\,  \tilde{t}_{\scriptscriptstyle 3}^{}, \ldots$ by their defining expressions and  applying \eqref{eq: tag basic-asymptotic}, 
we obtain an asymptotic expansion of $\mathcal{L}(\mathbf{p}, \tilde{\mathrm{{\bf t}}}, \hbar)$ of the form 
\begin{equation*}
\begin{split}  
\mathcal{L}(\mathbf{p}, \tilde{\mathrm{{\bf t}}}, \hbar) = \, 
(A_{\scriptscriptstyle 1, 2}^{} + B_{\scriptscriptstyle 1, 2}^{} \hbar) \sqrt{\hbar}
\, t_{\scriptscriptstyle 1}^{} t_{\scriptscriptstyle 2}^{} 
& + (A_{\scriptscriptstyle 1^{\! \scriptscriptstyle 3}} + B_{\scriptscriptstyle 1^{\! 3}}\hbar) \sqrt{\hbar}
\, t_{\scriptscriptstyle 1}^{3} + A_{\scriptscriptstyle 2, 4}^{} \hbar 
\,  t_{\scriptscriptstyle 2}^{} t_{\scriptscriptstyle 4}^{}  + 
A_{\scriptscriptstyle 2^{3}} \hbar \,  t_{\scriptscriptstyle 2}^{3} \\
& + A_{\scriptscriptstyle 1, 3}^{} \hbar \, t_{\scriptscriptstyle 1}^{}  t_{\scriptscriptstyle 3}^{}  
+ A_{\scriptscriptstyle 1^{\! 2}\!, 2} 
\hbar\, t_{\scriptscriptstyle 1}^{\, 2} t_{\scriptscriptstyle 2}^{} + 
A_{\scriptscriptstyle 1^{\! 4}} \hbar\, t_{\scriptscriptstyle 1}^{\, 4} \, +\, O(\hbar^{3 \slash 2})
\end{split}
\end{equation*} 
where, for instance, 
\begin{equation*} 
A_{\scriptscriptstyle 1, 2}^{} = 2\sqrt{2}\cdot \frac{\frac{\partial^{2} \mathrm{ch}_{\scriptscriptstyle 0}^{}(\mathcal{V})}
{\partial q_{\scriptscriptstyle 1}^{}\! \partial q_{\scriptscriptstyle 2}^{}}(\bar{\mathbf{q}})}
{\sqrt{\left(1 - \frac{\partial^{2} \mathrm{ch}_{\scriptscriptstyle 0}^{}(\mathcal{V})}
{\partial q_{\scriptscriptstyle 1}^{2}}(\bar{\mathbf{q}})\right)
\left(1 - \frac{\partial^{2} \psi_{\scriptscriptstyle 2}^{}\left(\mathrm{ch}_{\scriptscriptstyle 0}^{}(\mathcal{V})\right)}
{\partial q_{\scriptscriptstyle 2}^{\! 2}}(\bar{\mathbf{q}})\right)}}, \;\;\;\;  
A_{\scriptscriptstyle 1^{\! 3}} = \frac{\sqrt{2}}{3}\cdot \frac{\frac{\partial^{3} \mathrm{ch}_{\scriptscriptstyle 0}^{}(\mathcal{V})}{\partial q_{\scriptscriptstyle 1}^{3}}(\bar{\mathbf{q}})}
{\left(1 - \frac{\partial^{2} \mathrm{ch}_{\scriptscriptstyle 0}^{}(\mathcal{V})}
{\partial q_{\scriptscriptstyle 1}^{2}}(\bar{\mathbf{q}})\right)^{\! 3\slash 2}}
\end{equation*} 
and 
\begin{equation*} 
A_{\scriptscriptstyle 1^{\! 4}} = \frac{1}{6}\cdot \frac{\frac{\partial^{4} \mathrm{ch}_{\scriptscriptstyle 0}^{}(\mathcal{V})}
{\partial q_{\scriptscriptstyle 1}^{4}}(\bar{\mathbf{q}})}
{\left(1 - \frac{\partial^{2} \mathrm{ch}_{\scriptscriptstyle 0}^{}(\mathcal{V})}
{\partial q_{\scriptscriptstyle 1}^{2}}(\bar{\mathbf{q}})\right)^{\! 2}}.
\end{equation*} 
Finally, by applying the familiar Gaussian integral identity
\begin{equation*} 
\int_{\mathbb{R}} t^{k}  e^{- t^{2}} \frac{dt}{\sqrt{\pi}} 
= \begin{cases}  2^{- k\slash 2}(k - 1)!! &\mbox{if $k$ is even}\\
0 & \mbox{if $k$ is odd} 
\end{cases}
\end{equation*} 
one finds easily that 
\begin{equation*} 
\log \left(\int_{\mathbb{R}^{\infty}}^{\! *}  
e^{\mathcal{L}(\mathbf{p}, \tilde{\mathrm{{\bf t}}}, \hbar)} \, d\nu(\mathrm{{\bf t}}) \right) 
\, =\,  \left(\frac{1}{8}A_{\scriptscriptstyle 1, 2}^{2} + \frac{15}{16}A_{\scriptscriptstyle 1^{\! 3}}^{2}
+ \frac{3}{4}A_{\scriptscriptstyle 1^{\! 4}} \! \right)\hbar  +  O(\hbar^{2})
\end{equation*} 
and the asymptotic expansion stated in the theorem follows. This completes the proof. 
\end{proof}

We note that by expressing the exponential $e^{\mathcal{L}(\mathbf{p}, \tilde{\mathrm{{\bf t}}}, \hbar)}$ as a power series 
and then integrating, one can write the full asymptotic expan-\\sion of $\Psi(T(\mathbb{M}\mathcal{V})).$ One should also be able to interpret 
this asymptotic expansion, as in \cite[Proposition~3.6]{BH}, as an expansion over stable graphs. We shall not pursue this further since the 
calculations are quite cumbersome. Instead, we shall just use Theorem \ref{Main Theorem} to obtain formulas for the generating series 
$
\mathrm{ch}_{\scriptscriptstyle g}(\mathbb{M}\mathcal{V})
$ 
when $g \le 2.$

\section{Equivariant Euler characteristics} \label{equid-euler-examples} 
In this section we shall apply Theorem \ref{Main Theorem} to express 
$
\mathrm{ch}_{\scriptscriptstyle g}(\mathbb{M}\mathcal{V}),
$ 
when $g = 0, 1$ and $2,$ in terms of 
$
\mathrm{ch}_{\scriptscriptstyle g'}(\mathcal{V}),
$ 
$g' \le g.$ The formulas for 
$
\mathrm{ch}_{\scriptscriptstyle 0}(\mathbb{M}\mathcal{V})
$ 
and 
$
\mathrm{ch}_{\scriptscriptstyle 1}(\mathbb{M}\mathcal{V})
$ 
are not new; they were first obtained by Getzler and Kapranov \cite{GK} when $g = 0,$ and 
by Getzler \cite{Getz2} when $g = 1.$

From now on we adopt the following notation. Put 
$
\mathrm{\bf  a}_{\scriptscriptstyle g} := \mathrm{ch}_{\scriptscriptstyle g}(\mathcal{V}),\, 
\mathrm{\bf  b}_{\!\scriptscriptstyle g} := \mathrm{ch}_{\scriptscriptstyle g}(\mathbb{M}\mathcal{V}),
$ 
and for $g, \alpha_{\text{$\scriptstyle 1$}}, \ldots, \alpha_{s} \in \mathbb{N}$ ($\alpha_{s} \ge 1$), write
\begin{equation*} 
\mathrm{\bf  a}_{\scriptscriptstyle g}^{\text{($\alpha_{\scriptscriptstyle 1}, \ldots, \alpha_{s}$)}} = 
\frac{\partial^{\text{$\alpha_{\scriptscriptstyle 1} \! +   \cdots  + \alpha_{s}$}} \mathrm{ch}_{\scriptscriptstyle g}(\mathcal{V})}
{\partial q_{\text{$\scriptstyle 1$}}^{\text{$\alpha_{\scriptscriptstyle 1}$}} \cdots \, 
\partial q_{s}^{\text{$\alpha_{s}$}}} \;\;\; \textrm{and} \;\;\;\, 
\mathrm{\bf  b}_{\scriptscriptstyle g}^{\text{($\alpha_{\scriptscriptstyle 1}, \ldots, \alpha_{s}$)}} = 
\frac{\partial^{\text{$\alpha_{\scriptscriptstyle 1} \! +   \cdots  + \alpha_{s}$}} 
\mathrm{ch}_{\scriptscriptstyle g}(\mathbb{M}\mathcal{V})}
{\partial q_{\text{$\scriptstyle 1$}}^{\text{$\alpha_{\scriptscriptstyle 1}$}} \cdots \, 
\partial q_{s}^{\text{$\alpha_{s}$}}}.
\end{equation*} 
In what follows, we shall need the notion of the {\it Legendre transform} for symmetric functions, 
see \cite[Theorem~7.15]{GK}. To define this notion, let $\mathbb{Q}[\!\![x]\!\!]_{*}^{}$ denote the set 
of all formal power series $f \in \mathbb{Q}[\!\![x]\!\!]$ of the form 
\begin{equation*} 
f(x) = \sum_{s = 2}^{\infty} \frac{a_{s} \, x^{\, s}}{s!}  \;\qquad\; \text{(with $a_{2} \ne 0$).}
\end{equation*} 
Let $\mathrm{rk}: \Lambda \to \mathbb{Q}[\!\![x]\!\!]$ be the homomorphism defined 
by $h_{\scriptscriptstyle n} \mapsto x^{n} \slash n!,$ and denote by $\Lambda_{*}$ the set of symmetric functions 
$f \in \Lambda$ such that $\mathrm{rk}(f)\in \mathbb{Q}[\!\![x]\!\!]_{*}^{};$ note that $\mathrm{rk}(f)$ can also be 
obtained from $f(p_{\scriptscriptstyle 1}, p_{\scriptscriptstyle 2}, \ldots)$ by setting: 
$p_{\scriptscriptstyle 1} = x,$ and $p_{\scriptscriptstyle n} = 0$ if $n\ge 2.$ If $f \in \Lambda_{*},$ there is a unique 
element $g = \!\mathscr{L}\!f \in \Lambda_{*},$ called the {\it Legendre transform} of $f,$ determined by the formula 
\begin{equation*} 
g \, \circ \,
\frac{\partial f}{\partial p_{\scriptscriptstyle 1}^{}} + f 
= p_{\scriptscriptstyle 1}^{}\frac{\partial f}{\partial p_{\scriptscriptstyle 1}^{}}. 
\end{equation*} 
With the above notation and terminology, we have (see \cite[Theorem~7.17]{GK} or \cite[Theorem~5.9]{Getz1}):

\vskip10pt
\begin{thm}\label{Getzler1} --- Put 
$
f = e_{\scriptscriptstyle 2}^{} -  \mathrm{\bf  a}_{\scriptscriptstyle 0}^{} 
$ 
and 
$
g = h_{\scriptscriptstyle 2}^{} + \mathrm{\bf  b}_{\scriptscriptstyle 0}^{}. 
$ 
Then 
$
\mathscr{L}\!f = g.
$
\end{thm}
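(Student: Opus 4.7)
The plan is to extract the $\hbar^{-1}$ coefficient of the identity (\ref{eq: main identity}) from Theorem \ref{Main Theorem} and show it coincides with the Legendre transform relation $\mathscr{L}f = g$. First, since $\Psi(\mathrm{T}(\mathbb{M}\mathcal{V})) = \sum_{n \ge 1,\, g \ge 0} \frac{1}{n}\hbar^{n(g-1)}\psi_{\scriptscriptstyle n}^{}(\mathrm{\bf  b}_{\scriptscriptstyle g})$, only the $(n,g) = (1,0)$ term contributes at $\hbar^{-1}$, giving $\mathrm{\bf  b}_{\scriptscriptstyle 0}$ on the left. On the right, neither correction term contributes at this order: for each $m \ge 1$, the quantity $m\hbar^{m}\partial^{2}\Psi(\mathrm{T}(\mathcal{V}))/\partial q_{\scriptscriptstyle m}^{2}(\bar{\mathbf{q}}, \hbar)$ is $O(1)$ in $\hbar$ (its most singular piece, $\hbar^{-m}\cdot m^{-1}\psi_{\scriptscriptstyle m}^{}(\partial^{2}\mathrm{\bf  a}_{\scriptscriptstyle 0}/\partial q_{\scriptscriptstyle 1}^{2})$, gets promoted to $\hbar^{0}$), so every logarithm is $O(1)$, while $\log \big(\int^{*}e^{\mathcal{L}} d\nu \big) = O(\hbar)$ by Theorem \ref{Main Theorem}. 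Hence $\mathrm{\bf  b}_{\scriptscriptstyle 0}$ equals the $\hbar^{-1}$ coefficient of $\mathcal{K}(\mathbf{p}, \bar{\mathbf{q}}, \hbar)$.

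Second, I would put the Legendre identity into a form ready for matching. With $\partial f/\partial p_{\scriptscriptstyle 1} = p_{\scriptscriptstyle 1} - \partial \mathrm{\bf  a}_{\scriptscriptstyle 0}/\partial p_{\scriptscriptstyle 1} = c_{\scriptscriptstyle 1}^{}$ (the element of Proposition \ref{unique-formal-sol-barq}), composing $g \circ c_{\scriptscriptstyle 1} + f = p_{\scriptscriptstyle 1}c_{\scriptscriptstyle 1}$ plethystically with its inverse $\bar{c}_{\scriptscriptstyle 1, 0}^{}$, and using $e_{\scriptscriptstyle 2} = (p_{\scriptscriptstyle 1}^{2} - p_{\scriptscriptstyle 2})/2$ and $h_{\scriptscriptstyle 2} = (p_{\scriptscriptstyle 1}^{2} + p_{\scriptscriptstyle 2})/2$, the statement $\mathscr{L}f = g$ becomes equivalent to
\begin{equation*}
\mathrm{\bf  b}_{\scriptscriptstyle 0}(\mathbf{p}) \,=\, -\tfrac{1}{2}\big(\bar{c}_{\scriptscriptstyle 1, 0}^{} - p_{\scriptscriptstyle 1}\big)^{\! 2} + \tfrac{1}{2}\big(\psi_{\scriptscriptstyle 2}^{}(\bar{c}_{\scriptscriptstyle 1, 0}^{}) - p_{\scriptscriptstyle 2}\big) + \mathrm{\bf  a}_{\scriptscriptstyle 0}(\bar{\mathbf{q}}_{\scriptscriptstyle 0}^{}),
\end{equation*}
where $\bar{\mathbf{q}}_{\scriptscriptstyle 0}^{} = (\bar{c}_{\scriptscriptstyle 1, 0}^{}, \psi_{\scriptscriptstyle 2}^{}(\bar{c}_{\scriptscriptstyle 1, 0}^{}), \psi_{\scriptscriptstyle 3}^{}(\bar{c}_{\scriptscriptstyle 1, 0}^{}), \ldots)$ is the leading-order critical point.

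Finally I would compute the $\hbar^{-1}$ coefficient of $\mathcal{K}(\mathbf{p}, \bar{\mathbf{q}}, \hbar)$ by expanding $\bar{\mathbf{q}}$ as in Proposition \ref{unique-formal-sol-barq}. The $\hbar^{-1}\mathrm{\bf  a}_{\scriptscriptstyle 0}(\bar{\mathbf{q}})$ piece of $\Psi(\mathrm{T}(\mathcal{V}))(\bar{\mathbf{q}}, \hbar)$ gives $\mathrm{\bf  a}_{\scriptscriptstyle 0}(\bar{\mathbf{q}}_{\scriptscriptstyle 0}^{})$, and the $m=1$ quadratic term gives $-(\bar{c}_{\scriptscriptstyle 1, 0}^{} - p_{\scriptscriptstyle 1})^{2}/2$. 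For each $m \ge 2$, two kinds of $\hbar^{-1}$ contributions appear: one from the $\hbar^{m-1}$-correction of $\bar{q}_{\scriptscriptstyle m}^{}$ cross-multiplied with the $m$-th quadratic term, and a matching one from Taylor-expanding the $\hbar^{-m}$ piece $(1/m)\psi_{\scriptscriptstyle m}^{}(\mathrm{\bf  a}_{\scriptscriptstyle 0})(\bar{\mathbf{q}})$ of $\Psi(\mathrm{T}(\mathcal{V}))$ around $\bar{\mathbf{q}}_{\scriptscriptstyle 0}^{}$, both proportional to $\bar{c}_{\scriptscriptstyle m,\, m-1}^{}$. Using the leading-order critical-point identity $R_{\scriptscriptstyle m}^{} := \bar{q}_{\scriptscriptstyle m, 0}^{} - p_{\scriptscriptstyle m} = \psi_{\scriptscriptstyle m}^{}(\partial\mathrm{\bf  a}_{\scriptscriptstyle 0}/\partial q_{\scriptscriptstyle 1}^{})(\bar{\mathbf{q}}_{\scriptscriptstyle 0}^{})$, these two types cancel for every $m \ge 3$ and combine for $m = 2$ (where the extra $\varepsilon_{\scriptscriptstyle 2}^{}\hbar$ in the quadratic term intervenes) to produce exactly $(\psi_{\scriptscriptstyle 2}^{}(\bar{c}_{\scriptscriptstyle 1, 0}^{}) - p_{\scriptscriptstyle 2})/2$, which yields the boxed formula. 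The main obstacle is precisely this cancellation for $m \ge 2$: it is invisible at the leading-order $\bar{\mathbf{q}}_{\scriptscriptstyle 0}^{}$ and becomes transparent only after substituting the explicit form of $\bar{c}_{\scriptscriptstyle m,\, m-1}^{}$ (available from the recursion underlying Proposition \ref{unique-formal-sol-barq}) and invoking the genus-$0$ equations of motion.
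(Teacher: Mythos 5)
Your proposal is correct and follows essentially the same route as the paper: both arguments extract the $\hbar^{-1}$ part of the expansion \eqref{eq: main identity}, reduce it to the identity $\mathrm{\bf b}_{\scriptscriptstyle 0}^{} = -\tfrac{1}{2}(\bar{c}_{\scriptscriptstyle 1,0}^{} - p_{\scriptscriptstyle 1}^{})^{2} + \tfrac{1}{2}(\psi_{\scriptscriptstyle 2}^{}(\bar{c}_{\scriptscriptstyle 1,0}^{}) - p_{\scriptscriptstyle 2}^{}) + \mathrm{\bf a}_{\scriptscriptstyle 0}^{}\circ\bar{c}_{\scriptscriptstyle 1,0}^{}$, and then recover $\mathscr{L}f = g$ by composing with $c_{\scriptscriptstyle 1}^{}$; your cancellation between the quadratic terms and the Taylor expansion of $\psi_{\scriptscriptstyle m}^{}(\mathrm{\bf a}_{\scriptscriptstyle 0}^{})(\bar{\mathbf{q}})$ via the leading-order critical-point equation is exactly the paper's cancellation of the $U_{\! m}^{}$ terms. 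The only (cosmetic) difference is bookkeeping: the paper packages the singular part as $\Psi$ of explicit series and applies $\Psi^{-1}$ with a M\"obius-sum identity, whereas you read off the $\hbar^{-1}$ coefficient directly after noting that only the $(n,g)=(1,0)$ term of $\Psi(T(\mathbb{M}\mathcal{V}))$ lands there.
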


\begin{proof} \!By the definition of the Legendre transform and the identities 
$e_{\scriptscriptstyle 2}^{} = (p_{\scriptscriptstyle 1}^{2} - p_{\scriptscriptstyle 2}^{}) \slash 2$ 
and $h_{\scriptscriptstyle 2}^{} = (p_{\scriptscriptstyle 1}^{2} + p_{\scriptscriptstyle 2}^{}) \slash 2,$ 
we have to show that
\begin{equation*} \label{eq: identity-genus-0}
\bigg(\frac{p_{\scriptscriptstyle 1}^{2}  +  p_{\scriptscriptstyle 2}^{}}{2}  +   
\mathrm{\bf  b}_{\scriptscriptstyle 0}^{} \bigg)
\, \circ \, \left(p_{\scriptscriptstyle 1}^{} - \mathrm{\bf  a}_{\scriptscriptstyle 0}^{\scriptscriptstyle (1)}\right)
= \frac{p_{\scriptscriptstyle 1}^{2} + p_{\scriptscriptstyle 2}^{}}{2}  -  p_{\scriptscriptstyle 1}^{}
\mathrm{\bf  a}_{\scriptscriptstyle 0}^{\scriptscriptstyle (1)}   +  \mathrm{\bf  a}_{\scriptscriptstyle 0}^{}.
\end{equation*} 
To see this, we first observe that, by \eqref{eq: tag basic-asymptotic}, the only piece in the right-hand side 
of \eqref{eq: main identity} contributing negative powers of $\hbar$ to 
$\Psi(T(\mathbb{M}\mathcal{V}))$ is $\mathcal{K}(\mathbf{p}, \bar{\mathbf{q}}, \hbar).$ From the definition of 
$\mathcal{K}(\mathbf{p}, \mathbf{q}, \hbar)$ it is clear that  
\begin{equation*}
\mathcal{K}(\mathbf{p}, \bar{\mathbf{q}}, \hbar) = 
\sum_{m = 1}^{\infty}\left\{ - \left(\bar{q}_{\scriptscriptstyle m}^{} \! - p_{\scriptscriptstyle m}^{} \! - 
\varepsilon_{\scriptscriptstyle m}^{}\hbar^{m \slash 2} \right)^{\! 2}  + \, 
2\psi_{\scriptscriptstyle m}^{}\!\left(\mathrm{\bf  a}_{\scriptscriptstyle 0}^{} \right)
\!(\bar{\mathbf{q}})\right\} \slash 2 m \hbar^{m}   + \, O(1)
\end{equation*} 
with 
$
\psi_{\scriptscriptstyle m}^{}\!\left(\mathrm{\bf  a}_{\scriptscriptstyle 0}^{} \right) 
\!(\bar{\mathbf{q}})
$ 
given explicitly by 
\begin{equation*}
\psi_{\scriptscriptstyle m}^{}\!\left(\mathrm{\bf  a}_{\scriptscriptstyle 0}^{}\right)
\!(\bar{\mathbf{q}}) = 
\sum_{n \ge 3} \sum_{\rho \, \vdash \, n}\,
\big|\mathscr{M}_{{\scriptscriptstyle 0}, n \scriptscriptstyle \slash \overline{\mathbb{F}}}^{\rho F^{m}}\big| 
\frac{\bar{q}_{\scriptscriptstyle m \rho}}{z_{\rho}}. 
\end{equation*} 
Thus we have:
\begin{equation*} 
\Psi(T(\mathbb{M}\mathcal{V}))  = 
\sum_{m = 1}^{\infty}\left\{ - \left(\bar{q}_{\scriptscriptstyle m}^{} \! - p_{\scriptscriptstyle m}^{} \! 
- \varepsilon_{\scriptscriptstyle m}^{}\hbar^{m \slash 2} \right)^{\! 2}  +  
2 \psi_{\scriptscriptstyle m}^{}\!\left(\mathrm{\bf  a}_{\scriptscriptstyle 0}^{} \right) 
\!(\bar{\mathbf{q}})\right\} \slash 2 m \hbar^{m}   + \, O(1).
\end{equation*} 
Now, since 
\begin{equation*} 
\bar{q}_{\scriptscriptstyle r}^{}(\mathbf{p}, \hbar) = \bar{c}_{r, {\scriptscriptstyle 0}}^{}(\mathbf{p})  
+  O\big(\hbar^{(r + 1 - \varepsilon_{r}) \slash 2} \big) 
\qquad  \text{(for all $r\ge 1$)}
\end{equation*}
it follows that for every partition $\rho = \left(1^{\rho(1)}\!, \ldots, n^{\rho(n)} \right),$ we have 
\begin{equation*} 
\bar{q}_{\scriptscriptstyle m \rho}^{}(\mathbf{p}, \hbar)\hbar^{- m} = \left(\bar{c}_{m, {\scriptscriptstyle 0}}^{\, \rho(1)}(\mathbf{p})\hbar^{- m} 
\! + \rho(1)\hbar^{(1 - m - \varepsilon_{\scriptscriptstyle m}) \slash 2} \bar{c}_{m, {\scriptscriptstyle 0}}^{\, \rho(1) - 1}(\mathbf{p})
U_{\! m}^{}(\mathbf{p}, \hbar) \right) \prod_{l = 2}^{n} \bar{c}_{m l, {\scriptscriptstyle 0}}^{\, \rho(l)}(\mathbf{p}) + O(1)
\end{equation*} 
where 
$
U_{\! m}^{}(\mathbf{p}, \hbar) = (\bar{q}_{\scriptscriptstyle m}^{}(\mathbf{p}, \hbar) 
- \bar{c}_{m, {\scriptscriptstyle 0}}^{}(\mathbf{p}))
\hbar^{- (m + 1 - \varepsilon_{\scriptscriptstyle m}) \slash 2}.
$ 
Thus we can replace $\bar{q}_{\scriptscriptstyle m \rho}^{}\hbar^{- m},$ for every $m,$ by 
\begin{equation*} 
\hbar^{ - m}\prod_{l = 1}^{n} \bar{c}_{m l, {\scriptscriptstyle 0}}^{\, \rho(l)} \, + \, 
U_{\! m}^{}\hbar^{(1 - m - \varepsilon_{\scriptscriptstyle m}) \slash 2} \rho(1)\, \bar{c}_{m, {\scriptscriptstyle 0}}^{\, \rho(1) - 1}
\prod_{l = 2}^{n} \bar{c}_{m l, {\scriptscriptstyle 0}}^{\, \rho(l)}.
\end{equation*} 
This immediately yields 
\begin{equation*}
\hbar^{-m}\psi_{\scriptscriptstyle m}^{}\!\left(\mathrm{\bf a}_{\scriptscriptstyle 0}^{}\right)
\!(\bar{\mathbf{q}})  = 
\hbar^{-m}\psi_{\scriptscriptstyle m}^{}\!\left(\mathrm{\bf  a}_{\scriptscriptstyle 0}^{}\right)\!(\bar{C}(\mathbf{p}))  
+ \hbar^{(1 - m - \varepsilon_{\scriptscriptstyle m}) \slash 2} U_{\! m}^{}(\mathbf{p}, \hbar)(\bar{c}_{m, {\scriptscriptstyle 0}}^{}(\mathbf{p}) 
- p_{\scriptscriptstyle m}^{})  +  O(1).
\end{equation*} 
Similarly,
\begin{equation*}
\begin{split}
- \left(\bar{q}_{\scriptscriptstyle m}^{}  \! - p_{\scriptscriptstyle m}^{}  \! - \varepsilon_{\scriptscriptstyle m}^{}\hbar^{m \slash 2} \right)^{\! 2}  
\slash 2  \hbar^{m} & = - \, (\bar{c}_{m, {\scriptscriptstyle 0}}^{}(\mathbf{p}) - p_{\scriptscriptstyle m}^{})^{2} \slash 2 \hbar^{m} +  
\varepsilon_{\scriptscriptstyle m}^{} (\bar{c}_{m, {\scriptscriptstyle 0}}^{}(\mathbf{p}) - p_{\scriptscriptstyle m}^{}) \slash \hbar^{m \slash 2}\\ 
& \hskip20pt  - \hbar^{(1 - m - \varepsilon_{\scriptscriptstyle m}) \slash 2} 
U_{\! m}^{}(\mathbf{p}, \hbar)(\bar{c}_{m, {\scriptscriptstyle 0}}^{}(\mathbf{p}) - p_{\scriptscriptstyle m}^{})  +  O(1).
\end{split}
\end{equation*} 
Putting these calculations together, we find that
\begin{equation*}
\begin{split} 
\Psi(T(\mathbb{M}\mathcal{V})) = 
- \, \frac{1}{2}\Psi\bigg(\frac{(\bar{c}_{\scriptscriptstyle 1, 0}^{}(\mathbf{p}) - p_{\scriptscriptstyle 1}^{})^{2}}{\hbar} \bigg) 
\;\;  & +  \sum_{m-\text{even}} \frac{1}{m}\psi_{\scriptscriptstyle m}^{}
\bigg(\frac{\bar{c}_{\scriptscriptstyle 1, 0}^{}(\mathbf{p}) - p_{\scriptscriptstyle 1}^{}}{\sqrt{\hbar}} \bigg)  \\ 
& +\, \left(\Psi \!\left(\frac{\mathrm{\bf  a}_{\scriptscriptstyle 0}^{}}{\hbar} \right) 
\circ  \bar{c}_{\scriptscriptstyle 1, 0}^{} \right)(\mathbf{p}) \, + \, O(1).
\end{split}
\end{equation*} 
To this asymptotic expansion, we apply the operation $\Psi^{-1}(-) = \sum_{d \ge 1} \frac{\mu(d)}{d}\psi_{d}^{}(-).$ 
Using the identity 
\begin{equation*}
\sum_{\substack{d \mid N \\ \frac{N}{d}-\text{even}}} \mu(d) =
\begin{cases}
1 &\mbox{if} N = 2 \\
0 &\mbox{if} N \ne 2
\end{cases}
\end{equation*} 
and then equating the coefficients of $\hbar^{-1}$ in the resulting asymptotic expansion, one obtains the formula: 
\begin{equation} \label{eq: identity-genus-0-equiv}
\mathrm{\bf b}_{\scriptscriptstyle 0}^{}  
= - \frac{1}{2}(\bar{c}_{\scriptscriptstyle 1, 0}^{} - p_{\scriptscriptstyle 1}^{})^{2} 
- \frac{1}{2}p_{\scriptscriptstyle 2}^{}  + \frac{1}{2}\,
\psi_{\scriptscriptstyle 2}^{}\!\left(\bar{c}_{\scriptscriptstyle 1, 0}^{}\right)
+ \left(\mathrm{\bf  a}_{\scriptscriptstyle 0}^{} 
\circ  \bar{c}_{\scriptscriptstyle 1, 0}^{}\right)\!.
\end{equation} 
The theorem follows now by applying 
$
\circ \, c_{\scriptscriptstyle 1}^{} \, (\text{i.e.,} \,
\circ (p_{\scriptscriptstyle 1}^{} - \, \mathrm{\bf  a}_{\scriptscriptstyle 0}^{\scriptscriptstyle (1)}))
$ 
on the right of \eqref{eq: identity-genus-0-equiv}, and by recalling that 
$
\bar{c}_{\scriptscriptstyle 1, 0}^{}  \circ  c_{\scriptscriptstyle 1}^{} \! = p_{\scriptscriptstyle 1}^{}.
$
\end{proof}

\vskip10pt
\begin{cor} \label{expression of barc1,0} --- The symmetric functions 
\begin{equation*}
p_{\scriptscriptstyle 1}^{} - \, \mathrm{\bf  a}_{\scriptscriptstyle 0}^{\scriptscriptstyle (1)} \;\; \text{and} \;\;\; 
p_{\scriptscriptstyle 1}^{} + \, \mathrm{\bf  b}_{\scriptscriptstyle 0}^{\scriptscriptstyle (1)}
\end{equation*} 
are plethystic inverses. Thus 
$
\bar{c}_{\scriptscriptstyle 1, 0}^{} \! = p_{\scriptscriptstyle 1}^{} + \, 
\mathrm{\bf  b}_{\scriptscriptstyle 0}^{\scriptscriptstyle (1)}.
$ 
\end{cor}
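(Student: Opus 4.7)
My plan is to obtain the corollary by applying the plethystic derivative $\partial\slash\partial p_{\scriptscriptstyle 1}^{}$ to the Legendre-transform identity of Theorem \ref{Getzler1}. Setting $f = e_{\scriptscriptstyle 2}^{} - \mathrm{\bf a}_{\scriptscriptstyle 0}^{}$ and $g = h_{\scriptscriptstyle 2}^{} + \mathrm{\bf b}_{\scriptscriptstyle 0}^{}$, one has $f^{\scriptscriptstyle (1)} = p_{\scriptscriptstyle 1}^{} - \mathrm{\bf a}_{\scriptscriptstyle 0}^{\scriptscriptstyle (1)} = c_{\scriptscriptstyle 1}^{}$ and $g^{\scriptscriptstyle (1)} = p_{\scriptscriptstyle 1}^{} + \mathrm{\bf b}_{\scriptscriptstyle 0}^{\scriptscriptstyle (1)}$, and Theorem \ref{Getzler1} reads $g \circ f^{\scriptscriptstyle (1)} = p_{\scriptscriptstyle 1}^{} f^{\scriptscriptstyle (1)} - f.$ The goal is to extract from this the relation $g^{\scriptscriptstyle (1)} \circ f^{\scriptscriptstyle (1)} = p_{\scriptscriptstyle 1}^{}$, after which the remark preceding Proposition \ref{unique-formal-sol-barq}, which identifies $\bar{c}_{\scriptscriptstyle 1, 0}^{}$ as the unique plethystic inverse of $c_{\scriptscriptstyle 1}^{}$, immediately yields the second assertion.

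The crucial simplification I would exploit is that, for $n \ge 2$, the symmetric function $\psi_{\scriptscriptstyle n}^{}(f^{\scriptscriptstyle (1)})(\mathbf{p}) = f^{\scriptscriptstyle (1)}(p_{\scriptscriptstyle n}^{}, p_{\scriptscriptstyle 2n}^{}, \ldots)$ contains no $p_{\scriptscriptstyle 1}^{}$. Writing $g \circ f^{\scriptscriptstyle (1)}$ as the substitution $p_{\scriptscriptstyle n}^{} \!\mapsto \psi_{\scriptscriptstyle n}^{}(f^{\scriptscriptstyle (1)})$ in $g({\bf p})$ and invoking the ordinary chain rule, only the $n = 1$ slot survives. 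This collapses the calculation to
\begin{equation*}
\left( g^{\scriptscriptstyle (1)} \circ f^{\scriptscriptstyle (1)} \right) \cdot f^{\scriptscriptstyle (1,1)} \, = \, \frac{\partial}{\partial p_{\scriptscriptstyle 1}^{}}\!\left(p_{\scriptscriptstyle 1}^{} f^{\scriptscriptstyle (1)} - f \right) \, = \, p_{\scriptscriptstyle 1}^{} f^{\scriptscriptstyle (1,1)}.
\end{equation*}

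Next I would note that $f^{\scriptscriptstyle (1,1)} = 1 - \mathrm{\bf a}_{\scriptscriptstyle 0}^{\scriptscriptstyle (1,1)}$ has constant term $1$: indeed $\mathrm{\bf a}_{\scriptscriptstyle 0}^{}$ involves only monomials coming from $n \ge 3,$ so differentiating twice leaves a series with no constant term. Hence $f^{\scriptscriptstyle (1,1)}$ is a unit in the completed ring, can be cancelled, and the identity $g^{\scriptscriptstyle (1)} \circ f^{\scriptscriptstyle (1)} = p_{\scriptscriptstyle 1}^{}$ follows. Combined with the uniqueness of plethystic inverses, this gives both claims of the corollary at once.

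I expect the main technical obstacle to be formulating the plethystic chain rule rigorously and justifying the vanishing of the contributions from slots $n \ge 2$. If this turns out to be awkward, I can fall back to a direct computation: differentiate the explicit formula \eqref{eq: identity-genus-0-equiv} for $\mathrm{\bf b}_{\scriptscriptstyle 0}^{}$ with respect to $p_{\scriptscriptstyle 1}^{}$, noting that $\psi_{\scriptscriptstyle 2}^{}(\bar{c}_{\scriptscriptstyle 1, 0}^{})$ is $p_{\scriptscriptstyle 1}^{}$-independent, and use the inverse relation $\mathrm{\bf a}_{\scriptscriptstyle 0}^{\scriptscriptstyle (1)} \circ \bar{c}_{\scriptscriptstyle 1, 0}^{} = \bar{c}_{\scriptscriptstyle 1, 0}^{} - p_{\scriptscriptstyle 1}^{}$ to see that two terms involving $\bar{c}_{\scriptscriptstyle 1, 0}^{\scriptscriptstyle (1)}$ cancel, leaving $\mathrm{\bf b}_{\scriptscriptstyle 0}^{\scriptscriptstyle (1)} = \bar{c}_{\scriptscriptstyle 1, 0}^{} - p_{\scriptscriptstyle 1}^{}$. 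Either route yields the corollary.
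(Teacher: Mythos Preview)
Your argument is correct and follows the paper's route: both deduce the corollary from Theorem~\ref{Getzler1} together with the fact (established just before Proposition~\ref{unique-formal-sol-barq}) that $c_{\scriptscriptstyle 1}^{}$ and $\bar{c}_{\scriptscriptstyle 1,0}^{}$ are plethystic inverses. The only difference is packaging: the paper simply cites \cite[Theorem~7.15(c)]{GK} for the statement that $\partial f/\partial p_{\scriptscriptstyle 1}^{}$ and $\partial g/\partial p_{\scriptscriptstyle 1}^{}$ are plethystic inverses whenever $g = \mathscr{L}f$, whereas you re-derive this by differentiating the defining identity $g\circ f^{\scriptscriptstyle (1)} = p_{\scriptscriptstyle 1}^{}f^{\scriptscriptstyle (1)} - f$ and cancelling the unit $\partial^{2}\! f/\partial p_{\scriptscriptstyle 1}^{2}$ --- which is precisely how that part of \cite[Theorem~7.15]{GK} is proved. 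Your fallback via \eqref{eq: identity-genus-0-equiv} also works.

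One notational caveat: in the paper's conventions (set at the start of Section~\ref{equid-euler-examples}) the superscript $(\alpha_{\scriptscriptstyle 1},\ldots,\alpha_{s})$ records the \emph{orders} of differentiation in $p_{\scriptscriptstyle 1}^{},\ldots,p_{s}^{}$, so $(2)$ means $\partial^{2}\!/\partial p_{\scriptscriptstyle 1}^{2}$ while $(1,1)$ means $\partial^{2}\!/(\partial p_{\scriptscriptstyle 1}^{}\partial p_{\scriptscriptstyle 2}^{})$. What you denote $f^{\scriptscriptstyle (1,1)}$ should therefore be written $f^{\scriptscriptstyle (2)} = 1 - \mathrm{\bf a}_{\scriptscriptstyle 0}^{\scriptscriptstyle (2)}$.
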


\begin{proof} \!Let $f$ and $g$ be as in Theorem \ref{Getzler1}. Then 
\begin{equation*}
\frac{\partial f}{\partial p_{\scriptscriptstyle 1}^{}} = c_{\scriptscriptstyle 1}^{} \! = p_{\scriptscriptstyle 1}^{} -  \,
\mathrm{\bf  a}_{\scriptscriptstyle 0}^{\scriptscriptstyle (1)} \;\; \text{and} \;\;\; 
\frac{\partial g}{\partial p_{\scriptscriptstyle 1}^{}} = p_{\scriptscriptstyle 1}^{} + \, 
\mathrm{\bf  b}_{\scriptscriptstyle 0}^{\scriptscriptstyle (1)}.
\end{equation*} 
Since 
$
\mathscr{L}\!f = g,
$ 
the first assertion follows at once from \cite[Theorem~7.15 (c)]{GK}. The second assertion is an immediate 
consequence of the fact that $c_{\scriptscriptstyle 1}^{}$ and $\bar{c}_{\scriptscriptstyle 1, 0}^{}$ are 
plethystic inverses. 
\end{proof} 

The following theorem is the main result of \cite{Getz2}; for a direct combinatorial proof, the reader may consult \cite{Peter1}.

\vskip10pt
\begin{thm}\label{Getzler2} --- We have
\begin{equation*}
\mathrm{\bf  b}_{\scriptscriptstyle 1}^{} \! = 
\Bigg\{\mathrm{\bf  a}_{\scriptscriptstyle 1}^{}  
- \, \frac{1}{2}\sum_{m = 1}^{\infty} \frac{\varphi(m)}{m}
\log\left(1 - \psi_{\scriptscriptstyle m}^{}\!\left(\mathrm{\bf a}_{\scriptscriptstyle 0}^{\scriptscriptstyle (2)}\right) \right) \,  + \, 
\frac{\mathrm{\bf a}_{\scriptscriptstyle 0}^{\scriptscriptstyle (0,  1)}\! 
\left(\mathrm{\bf a}_{\scriptscriptstyle 0}^{\scriptscriptstyle (0, 1)}  + \,  1\right)
+ \frac{1}{4}\psi_{\scriptscriptstyle 2}^{}\!\left(\mathrm{\bf a}_{\scriptscriptstyle 0}^{\scriptscriptstyle (2)}\right)}
{1 - \psi_{\scriptscriptstyle 2}^{}\!\left(\mathrm{\bf a}_{\scriptscriptstyle 0}^{\scriptscriptstyle (2)}\right)} \Bigg\} 
\circ  \left(p_{\scriptscriptstyle 1}^{}  + \, \mathrm{\bf b}_{\scriptscriptstyle 0}^{\scriptscriptstyle (1)}\right)\!
\end{equation*} 
where $\varphi(m)$ is Euler's totient function.
\end{thm}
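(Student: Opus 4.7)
The plan is to extract the coefficient of $\hbar^0$ from both sides of the main identity \eqref{eq: main identity}, noting that $[\hbar^0]\Psi(T(\mathbb{M}\mathcal{V})) = \Psi(\mathrm{\bf b}_1)$, and then apply $\Psi^{-1}$ to obtain $\mathrm{\bf b}_1$.

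First, the integral term in \eqref{eq: main identity} is $O(\hbar)$ by the asymptotic expansion in Theorem \ref{Main Theorem}, hence contributes nothing at $\hbar^0$. For the log sum, the leading-order value of $m\hbar^m\partial_{q_m}^2 \Psi(T(\mathcal{V}))(\bar{\mathbf{q}}, \hbar)$ is $\psi_m(\mathrm{\bf a}_0^{(2)})(\bar{\mathbf{q}})$ (coming from $k = m, g = 0$ in $\Psi(T(\mathcal{V}))$), which at leading order in $\hbar$ equals $\psi_m(\mathrm{\bf a}_0^{(2)}) \circ \bar{c}_{1,0}$; higher-order corrections inside each log produce $O(\hbar)$ terms. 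Hence the $\hbar^0$ piece of the log sum is $-\frac{1}{2}\sum_{m \geq 1} \log(1 - \psi_m(\mathrm{\bf a}_0^{(2)}) \circ \bar{c}_{1,0})$.

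The substantial work is in expanding $\mathcal{K}(\mathbf{p}, \bar{\mathbf{q}}, \hbar) = -\sum_m (\bar{q}_m - p_m - \varepsilon_m\hbar^{m/2})^2/(2m\hbar^m) + \Psi(T(\mathcal{V}))(\bar{\mathbf{q}}, \hbar)$. Write $\bar{q}_m = \bar{c}_{m,0} + \eta_m$ with $\bar{c}_{m,0} = \psi_m(\bar{c}_{1,0})$. The $g = 1$ part of $\Psi(T(\mathcal{V}))$ contributes $\Psi(\mathrm{\bf a}_1 \circ \bar{c}_{1,0})$, while $g \geq 2$ parts are $O(\hbar)$. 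For each $k \geq 1$, Taylor expanding $\psi_k(\mathrm{\bf a}_0)(\bar{\mathbf{q}})$ around $\bar{C}_k = (\bar{c}_{k,0}, \bar{c}_{2k,0}, \ldots)$ and extracting the $\hbar^k$ coefficient yields a linear-$\bar{c}_{k,k}$ piece, a linear-$\bar{c}_{2k,k}$ piece, and (for $k$ even) a quadratic-$\bar{c}_{k,k/2}^2$ piece. I would then verify that the $\bar{c}_{k,k}$ contribution cancels identically against the Gaussian cross term $-(\bar{c}_{k,0} - p_k)\bar{c}_{k,k}/k$ via the identity $\bar{c}_{k,0} - p_k = \psi_k(\mathrm{\bf a}_0^{(1)}) \circ \bar{c}_{1,0}$. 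The remaining $\bar{c}_{2k,k}$ linear and quadratic contributions combine, upon substituting $\bar{c}_{2k,k} = (1 + 2\psi_k(\mathrm{\bf a}_0^{(0,1)}))/(1 - \psi_{2k}(\mathrm{\bf a}_0^{(2)})) \circ \bar{c}_{1,0}$ from Proposition \ref{unique-formal-sol-barq}, into the uniform expression
\begin{equation*}
\sum_{k \geq 1} \frac{1}{k}\psi_k\!\left(\frac{\mathrm{\bf a}_0^{(0,1)}(\mathrm{\bf a}_0^{(0,1)} + 1) + \tfrac{1}{4}\psi_2(\mathrm{\bf a}_0^{(2)})}{1 - \psi_2(\mathrm{\bf a}_0^{(2)})}\right) \circ \bar{c}_{1,0} \;=\; \Psi(R) \circ \bar{c}_{1,0},
\end{equation*}
where $R$ is the correction expression appearing in Theorem \ref{Getzler2}.

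Finally, I would apply $\Psi^{-1} = \sum_d \mu(d)/d \cdot \psi_d$, using $\Psi^{-1}(\Psi f) = f$, $\Psi^{-1}(f \circ g) = \Psi^{-1}(f) \circ g$ (from associativity of plethysm), and the key identity
\begin{equation*}
\Psi^{-1}\!\left(\sum_{m \geq 1}\log(1 - \psi_m(h))\right) = \sum_{n \geq 1}\frac{\varphi(n)}{n}\log(1 - \psi_n(h)),
\end{equation*}
which follows from $\sum_{d \mid n}\mu(d)/d = \varphi(n)/n$; the substitution $\bar{c}_{1,0} = p_1 + \mathrm{\bf b}_0^{(1)}$ from Corollary \ref{expression of barc1,0} then yields the stated formula. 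The hardest step will be the bookkeeping in the $\mathcal{K}$ expansion: verifying that all $\bar{c}_{k,k}$ contributions cancel pairwise against Gaussian cross terms, and that the surviving $\bar{c}_{2k,k}$ and $\bar{c}_{k,k/2}^2$ contributions align, uniformly in $k$, into a single $\Psi$-expression that $\Psi^{-1}$ collapses cleanly into the closed-form correction.
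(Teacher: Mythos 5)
Your proposal is correct and follows essentially the same route as the paper's proof: extract the $\hbar^{0}$ coefficient of \eqref{eq: main identity}, discard the $O(\hbar)$ integral term, handle the logarithmic sum via $\sum_{r \mid m}\mu(r)/r = \varphi(m)/m$, and reduce the Gaussian-plus-genus-zero part of $\mathcal{K}(\mathbf{p},\bar{\mathbf{q}},\hbar)$ by Taylor expansion around $\bar{C}(\mathbf{p})$, with the $\bar{c}_{k,k}$ terms cancelling via $\bar{c}_{k,0}-p_{k}=\psi_{k}\bigl(\mathrm{\bf a}_{0}^{\scriptscriptstyle (1)}\bigr)\circ\bar{c}_{1,0}$ and the surviving $\bar{c}_{2k,k}$ and $\bar{c}_{k,k/2}^{2}$ terms assembling into $\Psi(R)\circ\bar{c}_{1,0}$ before applying $\Psi^{-1}$ and Corollary \ref{expression of barc1,0}. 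The only difference is presentational: you spell out the cancellation bookkeeping that the paper compresses into the phrase ``just as in the proof of Theorem \ref{Getzler1},'' and the combination of the remaining terms into the closed form does check out.
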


\begin{proof} \!The proof is similar to that of Theorem \ref{Getzler1}. By Theorem \ref{Main Theorem} and the definition of 
$\mathcal{K}(\mathbf{p}, \mathbf{q}, \hbar),$ we have that 
\begin{equation*}
\begin{split}
\Psi(T(\mathbb{M}\mathcal{V})) = 
\Psi(\mathrm{\bf a}_{\scriptscriptstyle 1}^{})(\bar{\mathbf{q}}) \,& + 
\sum_{m = 1}^{\infty} \left\{- \left(\bar{q}_{\scriptscriptstyle m}^{} \! - p_{\scriptscriptstyle m}^{} 
\! - \varepsilon_{\scriptscriptstyle m}^{}\hbar^{m \slash 2} \right)^{\! 2} 
+ \, 2 \psi_{\scriptscriptstyle m}^{}(\mathrm{\bf a}_{\scriptscriptstyle 0}^{})(\bar{\mathbf{q}}) \right\} \slash 2 m \hbar^{m} \\
& - \frac{1}{2}\sum_{m = 1}^{\infty}\log\left(1 - m \hbar^{m}\frac{\partial^{2}
\Psi(T(\mathcal{V}))}{\partial q_{\scriptscriptstyle m}^{2}}(\bar{\mathbf{q}}, \hbar) \right) 
\, + \, O(\hbar).
\end{split}
\end{equation*} 
By Corollary \ref{expression of barc1,0}, the constant term of 
$ 
\Psi(\mathrm{\bf a}_{\scriptscriptstyle 1}^{})(\bar{\mathbf{q}}) 
$ 
is: 
\begin{equation*} 
\Psi(\mathrm{\bf a}_{\scriptscriptstyle 1}^{})(\bar{\mathbf{q}}(\mathbf{p}, 0)) 
= \Psi(\mathrm{\bf a}_{\scriptscriptstyle 1}^{})(\bar{C}(\mathbf{p})) 
= (\Psi(\mathrm{\bf a}_{\scriptscriptstyle 1}^{}) \circ \bar{c}_{\scriptscriptstyle 1, 0}^{})(\mathbf{p})  
= \Psi(\mathrm{\bf a}_{\scriptscriptstyle 1}^{}) \circ 
\left(p_{\scriptscriptstyle 1}^{}  + \, \mathrm{\bf b}_{\scriptscriptstyle 0}^{\scriptscriptstyle (1)}\!(\mathbf{p})\right).
\end{equation*} 
By applying the linear operation $\Psi^{-1}$ to this, we obtain the first contribution to 
$\mathrm{\bf b}_{\scriptscriptstyle 1}^{}$ 
\!stated in the theorem.

The second contribution corresponds to
\begin{equation}  \label{eq: log-part genus 1}
\left. - \frac{1}{2}\sum_{m = 1}^{\infty}\log\left(1 - m \hbar^{m}\frac{\partial^{2}
\Psi(T(\mathcal{V}))}{\partial q_{\scriptscriptstyle m}^{2}}(\bar{\mathbf{q}}, \hbar) \right) \right\vert_{\hbar = 0}.
\end{equation} 
From \eqref{eq: tag basic-asymptotic} and Corollary \ref{expression of barc1,0}, we have 
\begin{equation*} 
\left.\log\left(1 -  m \hbar^{m} \frac{\partial^{2} \Psi(T(\mathcal{V}))}
{\partial q_{\scriptscriptstyle m}^{2}}(\bar{\mathbf{q}}, \hbar) \right)\right\vert_{\hbar = 0}
=\; \log\left(1 - \psi_{\scriptscriptstyle m}^{}\!\left(\mathrm{\bf a}_{\scriptscriptstyle 0}^{\scriptscriptstyle (2)}\right) \right) 
\circ \left(p_{\scriptscriptstyle 1}^{}  + \,  \mathrm{\bf b}_{\scriptscriptstyle 0}^{\scriptscriptstyle (1)}\!(\mathbf{p})\right)
\end{equation*} 
for all $m \ge 1.$ Thus the contribution \eqref{eq: log-part genus 1} is given by  
\begin{equation*}
- \frac{1}{2}\left(\sum_{k = 1}^{\infty} 
\log\left(1 - \psi_{\scriptscriptstyle k}^{}\!\left(\mathrm{\bf a}_{\scriptscriptstyle 0}^{\scriptscriptstyle (2)}\right) \right)\right) 
\circ \left(p_{\scriptscriptstyle 1}^{}  + \, \mathrm{\bf b}_{\scriptscriptstyle 0}^{\scriptscriptstyle (1)}\!(\mathbf{p})\right).
\end{equation*} 
As before, we now apply the operation $\Psi^{-1}(-) = \sum_{r \ge 1} \frac{\mu(r)}{r} \psi_{r}(-).$ Summing first over 
$m = kr,$ and using the well-known M\"obius inversion formula 
\begin{equation*}
\sum_{r\mid m} \frac{\mu(r)}{r} = \frac{\varphi(m)}{m} 
\end{equation*} 
we find that the contribution to $\mathrm{\bf b}_{\scriptscriptstyle 1}^{}$ corresponding to \eqref{eq: log-part genus 1} is  \begin{equation*}
- \frac{1}{2}\left\{\sum_{m = 1}^{\infty} \frac{\varphi(m)}{m}
\log\left(1 - \psi_{\scriptscriptstyle m}^{}\!\left(\mathrm{\bf a}_{\scriptscriptstyle 0}^{\scriptscriptstyle (2)}\right) \right) \right\} 
\circ \left(p_{\scriptscriptstyle 1}^{}  + \, \mathrm{\bf b}_{\scriptscriptstyle 0}^{\scriptscriptstyle (1)}\!(\mathbf{p})\right).
\end{equation*} 
It remains to compute the constant term of 
\begin{equation*} 
\sum_{m = 1}^{\infty} \left\{ - \left(\bar{q}_{\scriptscriptstyle m}^{} \! - p_{\scriptscriptstyle m}^{} 
\! - \varepsilon_{\scriptscriptstyle m}^{}\hbar^{m \slash 2} \right)^{\! 2} 
+ 2\psi_{\scriptscriptstyle m}^{}\!(\mathrm{\bf a}_{\scriptscriptstyle 0}^{})(\bar{\mathbf{q}}) \right\} \slash 2 m \hbar^{m}.
\end{equation*} 
Just as in the proof of Theorem \ref{Getzler1}, for every $m\ge 1,$ the constant term of the summand is 
\begin{equation*} 
\frac{1}{m}\, U_{\scriptscriptstyle 2 m}^{}(\mathbf{p}, 0)\, 
\psi_{\scriptscriptstyle m}^{}\!\left(\mathrm{\bf  a}_{\scriptscriptstyle 0}^{\scriptscriptstyle (0, 1)}\right)(\bar{C}(\mathbf{p})) \, 
+\, \frac{\varepsilon_{m}^{}}{m} \left\{\frac{1}{2}\left( - 1 + \psi_{\scriptscriptstyle m}^{}\!
\left(\mathrm{\bf a}_{\scriptscriptstyle 0}^{\scriptscriptstyle (2)}\right)(\bar{C}(\mathbf{p}))\right)
U_{\scriptscriptstyle m}^{2}(\mathbf{p}, 0) + U_{\scriptscriptstyle m}^{}(\mathbf{p}, 0) - \frac{1}{2} \right\}\!;
\end{equation*} 
by Proposition \ref{unique-formal-sol-barq}, we know that 
\begin{equation*}
U_{\scriptscriptstyle m}^{}(\cdot, 0) = \bar{c}_{\scriptscriptstyle m, m \slash 2}^{} = 
\frac{1  +  2 \psi_{\! \scriptscriptstyle \frac{m}{2}}^{}\!
\left(\mathrm{\bf a}_{\scriptscriptstyle 0}^{\scriptscriptstyle (0, 1)}\right)}
{1 -  \psi_{\scriptscriptstyle m}^{}\!
\left(\mathrm{\bf a}_{\scriptscriptstyle 0}^{\scriptscriptstyle (2)}\right)} 
\circ \bar{c}_{\scriptscriptstyle 1, 0}^{} \;\;\;\;\;\;  \text{(when $m$ is even)}.
\end{equation*} 
Applying the operation $\Psi^{-1}\!,$ it follows that the final contribution to $\mathrm{\bf b}_{\scriptscriptstyle 1}^{}$ is 
\begin{equation*} 
\frac{\left(\mathrm{\bf a}_{\scriptscriptstyle 0}^{\scriptscriptstyle (0, 1)} \right)^{2} \! + \,  
\mathrm{\bf a}_{\scriptscriptstyle 0}^{\scriptscriptstyle (0, 1)} 
+ \frac{1}{4}\psi_{\scriptscriptstyle 2}^{}\!\left(\mathrm{\bf a}_{\scriptscriptstyle 0}^{\scriptscriptstyle (2)}\right)}
{1 - \psi_{\scriptscriptstyle 2}^{}\!\left(\mathrm{\bf a}_{\scriptscriptstyle 0}^{\scriptscriptstyle (2)}\right)} 
\circ  \bar{c}_{\scriptscriptstyle 1, 0}^{}.
\end{equation*} 
The formula of $\mathrm{\bf b}_{\scriptscriptstyle 1}^{}$ \!stated in the theorem follows now from 
Corollary \ref{expression of barc1,0}. 
\end{proof}

To state the analogous result when $g = 2,$ let us introduce some notation. For $m \ge 2$ even, write 
$
\bar{c}_{m, m \slash 2}^{} = \mathrm{\bf v}_{\! \scriptscriptstyle m} \circ \, 
\bar{c}_{\scriptscriptstyle 1, 0}^{},
$ 
where, by Pro-\\position \ref{unique-formal-sol-barq}, 
\begin{equation*}
\mathrm{\bf v}_{\! \scriptscriptstyle m} = \frac{1  +  2 \psi_{\! \scriptscriptstyle \frac{m}{2}}^{}
\!\left(\mathrm{\bf a}_{\scriptscriptstyle 0}^{\scriptscriptstyle (0, 1)} \right)}
{1  -  \psi_{\scriptscriptstyle m}^{}\!\left(\mathrm{\bf  a}_{\scriptscriptstyle 0}^{\scriptscriptstyle (2)}\right)}.
\end{equation*} 
Define 
\begin{equation*} 
\mathrm{\bf  w}_{\scriptscriptstyle 1} = \frac{\mathrm{\bf  a}_{\scriptscriptstyle 0}^{\scriptscriptstyle (1,  1)}
\!\left(1  +  2 \mathrm{\bf  a}_{\scriptscriptstyle 0}^{\scriptscriptstyle (0,   1)} \right)}
{\left(1  -   \mathrm{\bf  a}_{\scriptscriptstyle 0}^{\scriptscriptstyle (2)} \right)
\left(1  -   \psi_{\scriptscriptstyle 2}^{}\!\left(\mathrm{\bf  a}_{\scriptscriptstyle 0}^{\scriptscriptstyle (2)}\right)\right)}  +  
\frac{\mathrm{\bf  a}_{\scriptscriptstyle 1}^{\scriptscriptstyle (1)}}
{1  -   \mathrm{\bf  a}_{\scriptscriptstyle 0}^{\scriptscriptstyle (2)}}
\end{equation*} 
\begin{equation*} 
\begin{split}
\mathrm{\bf  w}_{\scriptscriptstyle 2} =   
\frac{\psi_{\scriptscriptstyle 2}^{}\!\left(\mathrm{\bf  a}_{\scriptscriptstyle 1}^{\scriptscriptstyle (1)} \right)  +  
2  \mathrm{\bf  a}_{\scriptscriptstyle 1}^{\scriptscriptstyle (0,  1)}  +  
2 \mathrm{\bf  a}_{\scriptscriptstyle 0}^{\scriptscriptstyle (1,  1)}\mathrm{\bf  w}_{\! \scriptscriptstyle 1}}
{1  -   \psi_{\scriptscriptstyle 2}^{}\!\left(\mathrm{\bf  a}_{\scriptscriptstyle 0}^{\scriptscriptstyle (2)}\right)}  +  
\frac{2 \mathrm{\bf  a}_{\scriptscriptstyle 0}^{\scriptscriptstyle (0,  2)} \! 
\left(1  +  2 \mathrm{\bf  a}_{\scriptscriptstyle 0}^{\scriptscriptstyle (0,  1)} \right)}
{\left(1  -   \psi_{\scriptscriptstyle 2}^{}\!\left(\mathrm{\bf  a}_{\scriptscriptstyle 0}^{\scriptscriptstyle (2)}\right)\right)^{ 2}}
\, & + \, \frac{1}{2}\frac{\left(1  +  2  \mathrm{\bf  a}_{\scriptscriptstyle 0}^{\scriptscriptstyle (0,  1)} \right)^{ 2}
\! \psi_{\scriptscriptstyle 2}^{}\!\left(\mathrm{\bf  a}_{\scriptscriptstyle 0}^{\scriptscriptstyle (3)}\right)}
{\left(1  -  \psi_{\scriptscriptstyle 2}^{}\!\left(\mathrm{\bf  a}_{\scriptscriptstyle 0}^{\scriptscriptstyle (2)}\right)\right)^{ 3}} \\
& + \, \frac{\left(1  +  2 \psi_{\scriptscriptstyle 2}^{}\!\left(\mathrm{\bf  a}_{\scriptscriptstyle 0}^{\scriptscriptstyle (0,  1)}\right)\right)
\psi_{\scriptscriptstyle 2}^{}\!\left(\mathrm{\bf  a}_{\scriptscriptstyle 0}^{\scriptscriptstyle (1,  1)}\right)}
{\left(1  -   \psi_{\scriptscriptstyle 2}^{}\!\left(\mathrm{\bf  a}_{\scriptscriptstyle 0}^{\scriptscriptstyle (2)}\right)\right)
\left(1  -  \psi_{\scriptscriptstyle 4}^{}\!\left(\mathrm{\bf  a}_{\scriptscriptstyle 0}^{\scriptscriptstyle (2)}\right)\right)}
\end{split}
\end{equation*} 
\begin{equation*} 
\mathrm{\bf  w}_{\scriptscriptstyle 3} = \frac{3  \mathrm{\bf  a}_{\scriptscriptstyle 0}^{\scriptscriptstyle (0,  0,  1)}}
{1  -   \psi_{\scriptscriptstyle 3}^{}\!\left(\mathrm{\bf  a}_{\scriptscriptstyle 0}^{\scriptscriptstyle (2)}\right)}\;\;\;\;\;\;
\mathrm{\bf  w}_{\scriptscriptstyle 4} = \frac{2 \left(1  +  2  \mathrm{\bf  a}_{\scriptscriptstyle 0}^{\scriptscriptstyle (0,  1)} \right)
\psi_{\scriptscriptstyle 2}^{}\!\left(\mathrm{\bf  a}_{\scriptscriptstyle 0}^{\scriptscriptstyle (1,  1)} \right)}
{\left(1  -  \psi_{\scriptscriptstyle 2}^{}\!\left(\mathrm{\bf  a}_{\scriptscriptstyle 0}^{\scriptscriptstyle (2)}\right)\right)
\left(1  -   \psi_{\scriptscriptstyle 4}^{}\!\left(\mathrm{\bf  a}_{\scriptscriptstyle 0}^{\scriptscriptstyle (2)}\right)\right)} \, + \, 
\frac{4 \mathrm{\bf  a}_{\scriptscriptstyle 0}^{\scriptscriptstyle (0,  0,  0,  1)}}
{1  -   \psi_{\scriptscriptstyle 4}^{}\!\left(\mathrm{\bf  a}_{\scriptscriptstyle 0}^{\scriptscriptstyle (2)}\right)}\;\;\;\;\;\; 
\mathrm{\bf  w}_{\scriptscriptstyle 6} =  
\frac{3  \psi_{\scriptscriptstyle 2}^{}\!\left(\mathrm{\bf  a}_{\scriptscriptstyle 0}^{\scriptscriptstyle (0,  0,  1)}\right)}
{1  -   \psi_{\scriptscriptstyle 6}^{}\!\left(\mathrm{\bf  a}_{\scriptscriptstyle 0}^{\scriptscriptstyle (2)}\right)}
\end{equation*} 
and $\mathrm{\bf  w}_{\! \scriptscriptstyle m} \! = 0$ for $m\ge 5, \, m \ne 6.$ By solving for the coefficients 
$
\bar{c}_{m, \, (m + 1 + \varepsilon_{m}) \slash 2}^{}(\mathbf{p})
$ 
in \eqref{eq: system-determine-qn}, we see that 
\begin{equation*}
\bar{c}_{m, \, (m + 1 + \varepsilon_{m}) \slash 2}^{} = \mathrm{\bf  w}_{\! \scriptscriptstyle m} \circ \,\bar{c}_{\scriptscriptstyle 1,  0}^{}.
\end{equation*} 
With these quantities we now define 
\begin{equation}  \label{eq: contribution g = 2}
\begin{split}
&\overline{\mathrm{\bf  b}}_{\scriptscriptstyle 2}^{} : = 
\underbrace{\mathrm{\bf  a}_{\scriptscriptstyle 2}^{}}_{\textrm{\textcolor{MyDarkRed}{${\scriptstyle{g = 2}}$}}}  +\, 
\underbrace{\mathrm{\bf  w}_{\! \scriptscriptstyle 1}^{}
\mathrm{\bf  a}_{\scriptscriptstyle 1}^{\scriptscriptstyle (1)}
\! +\, \mathrm{\bf  v}_{\! \scriptscriptstyle 2}^{}\mathrm{\bf  a}_{\scriptscriptstyle 1}^{\scriptscriptstyle (0,  1)} 
\! + \, \frac{1}{2}\mathrm{\bf  v}_{\! \scriptscriptstyle 2}^{}
\psi_{\scriptscriptstyle 2}^{}\!\left(\mathrm{\bf  a}_{\scriptscriptstyle 1}^{\scriptscriptstyle (1)}\right)
}_{\textrm{\textcolor{MyDarkRed}{${\scriptstyle{g = 1}}$}}} \; + \;
\underbrace{\mathrm{\bf  v}_{\! \scriptscriptstyle 2}^{}\mathrm{\bf  w}_{\! \scriptscriptstyle 1}^{} 
\mathrm{\bf  a}_{\scriptscriptstyle 0}^{\scriptscriptstyle (1,  1)} 
+ \, \frac{1}{2}\mathrm{\bf  v}_{\! \scriptscriptstyle 2}^{ \scriptscriptstyle 2}
\mathrm{\bf  a}_{\scriptscriptstyle 0}^{\scriptscriptstyle (0,  2)}
+ \, \mathrm{\bf  w}_{\! \scriptscriptstyle 3}^{}\mathrm{\bf  a}_{\scriptscriptstyle 0}^{\scriptscriptstyle (0,  0,  1)}
+ \, \mathrm{\bf  v}_{\! \scriptscriptstyle 4}^{}\mathrm{\bf  a}_{\scriptscriptstyle 0}^{\scriptscriptstyle (0,  0,  0,  1)}
}_{\textrm{\textcolor{MyDarkRed}{${\scriptstyle{g  =  0}}$ ${\scriptstyle{\textrm{and}}}$ 
${\scriptstyle{m  = 1}}$}}} \\ 
& \hskip10pt +  \underbrace{ \frac{1}{12}\mathrm{\bf  v}_{\! \scriptscriptstyle 2}^{ \scriptscriptstyle 3}
\psi_{\scriptscriptstyle 2}^{}\!\left(\mathrm{\bf  a}_{\scriptscriptstyle 0}^{\scriptscriptstyle (3)}\right) 
 +  \frac{1}{2}\mathrm{\bf  v}_{\! \scriptscriptstyle 2}^{} \mathrm{\bf  v}_{\! \scriptscriptstyle 4}^{} 
\psi_{\scriptscriptstyle 2}^{}\!\left(\mathrm{\bf  a}_{\scriptscriptstyle 0}^{\scriptscriptstyle (1, 1)}\right) 
 +  \frac{1}{2}\mathrm{\bf  v}_{\! \scriptscriptstyle 6}^{} 
\psi_{\scriptscriptstyle 2}^{}\!\left(\mathrm{\bf  a}_{\scriptscriptstyle 0}^{\scriptscriptstyle (0,  0,  1)}\right)
}_{\textrm{\textcolor{MyDarkRed}{${\scriptstyle{g  =  0}}$ ${\scriptstyle{\textrm{and}}}$ ${\scriptstyle{m  = 2}}$}}}\\   
&  + \! \sum_{m-\textrm{even}} 
\bigg\{\underbrace{\!\! \frac{\mathrm{\bf  v}_{\! \scriptscriptstyle m}^{}\mathrm{\bf  w}_{\! \scriptscriptstyle m}^{}
\psi_{\scriptscriptstyle m}^{}\!\left(\mathrm{\bf  a}_{\scriptscriptstyle 0}^{\scriptscriptstyle (2)}\right)  
+  \mathrm{\bf  w}_{\! \scriptscriptstyle 2 m}^{}
\psi_{\scriptscriptstyle m}^{}\!\left(\mathrm{\bf  a}_{\scriptscriptstyle 0}^{\scriptscriptstyle (0,  1)}\right)}{m}
}_{\textrm{\textcolor{MyDarkRed}{${\scriptstyle{g  =  0, \, m  \le 6}}$}}} \, + \!\!\!\!\!\!\!\!\!
\underbrace{\frac{\mathrm{\bf  w}_{\! \scriptscriptstyle m}^{}\left(1 -  \mathrm{\bf  v}_{\! \scriptscriptstyle m}^{}\right)}{m}
}_{\textrm{\textcolor{MyDarkRed}
{${\scriptstyle{ - \, \left(\bar{q}_{\scriptscriptstyle m}^{} - \, p_{\scriptscriptstyle m}^{} - \, \hbar^{m \slash 2} \right)^{ 2} 
\! \slash 2 m \hbar^{m}\!, \,  m \le 6}}$}}} \!\!\!\!\!\!\!\!\!\!\!\! \bigg\}
\; + \!\! \sum_{m-\textrm{odd}} 
\bigg\{\underbrace{\!\! \frac{\mathrm{\bf  w}_{\! \scriptscriptstyle 2 m}^{}
\psi_{\scriptscriptstyle m}^{}\!\left(\mathrm{\bf  a}_{\scriptscriptstyle 0}^{\scriptscriptstyle (0,  1)}\right) 
 + \frac{1}{2}\mathrm{\bf  w}_{\! \scriptscriptstyle m}^{ \scriptscriptstyle 2}
\psi_{\scriptscriptstyle m}^{}\!\left(\mathrm{\bf  a}_{\scriptscriptstyle 0}^{\scriptscriptstyle (2)}\right)}{m}
}_{\textrm{\textcolor{MyDarkRed}
{${\scriptstyle{g = 0, \,  m \le 3}}$}}} \, - \!\!\!\!\!\!\!\!\!\!\!\!\!\!\!\!\!\!\!
\underbrace{\frac{\mathrm{\bf  w}_{\! \scriptscriptstyle m}^{ \scriptscriptstyle 2}}{2 m} 
}_{\textrm{\textcolor{MyDarkRed}
{${\scriptstyle{\!\! - \, \left(\bar{q}_{\scriptscriptstyle m}^{} - \,  p_{\scriptscriptstyle m}^{} \right)^{ 2} 
\! \slash 2 m \hbar^{m}\!, \,  m \le 3}}$}}}  \!\!\!\!\!\!\!\!\!\!\!\!\!\!\!\!\!\!\!\!\bigg\} \\ 
&  \hskip10pt +  \underbrace{\frac{1}{2}\left(\frac{\mathrm{\bf  w}_{\! \scriptscriptstyle 1}^{}
\mathrm{\bf  a}_{\scriptscriptstyle 0}^{\scriptscriptstyle (3)}  \! +  \mathrm{\bf  v}_{\! \scriptscriptstyle 2}^{}
\mathrm{\bf  a}_{\scriptscriptstyle 0}^{\scriptscriptstyle (2,  1)}}{1  -  \mathrm{\bf a}_{\scriptscriptstyle 0}^{\scriptscriptstyle (2)}}  +  
\frac{\mathrm{\bf  v}_{\! \scriptscriptstyle 2}^{} 
\psi_{\scriptscriptstyle 2}^{}\!\left(\mathrm{\bf  a}_{\scriptscriptstyle 0}^{\scriptscriptstyle (3)}\right)}
{1  - \psi_{\scriptscriptstyle 2}^{}\!\left(\mathrm{\bf  a}_{\scriptscriptstyle 0}^{\scriptscriptstyle (2)}\right)}
 +  \frac{\mathrm{\bf  a}_{\scriptscriptstyle 1}^{\scriptscriptstyle (2)}}
{1  -  \mathrm{\bf  a}_{\scriptscriptstyle 0}^{\scriptscriptstyle (2)}}\right)
\; + \, \frac{\mathrm{\bf  a}_{\scriptscriptstyle 0}^{\scriptscriptstyle (0,  2)}}
{1  -  \psi_{\scriptscriptstyle 2}^{}\!\left(\mathrm{\bf  a}_{\scriptscriptstyle 0}^{\scriptscriptstyle (2)} \right)}}_{\textrm{\textcolor{MyDarkRed}
{${\scriptstyle{- \frac{1}{2}\sum_{m = 1}^{\infty}\log\left(1  -  m \hbar^{m}\frac{\partial^{2}
\Psi(T(\mathcal{V}))}{\partial q_{\scriptscriptstyle m}^{2}}(\bar{\mathbf{q}}, \hbar) \right)}}$}}}\\
&  \hskip10pt +  \underbrace{\frac{\left(\mathrm{\bf  a}_{\scriptscriptstyle 0}^{\scriptscriptstyle (1,  1)}\right)^{\! 2}}
{\left(1 - \mathrm{\bf  a}_{\scriptscriptstyle 0}^{\scriptscriptstyle (2)}\right)
\left(1 -  \psi_{\scriptscriptstyle 2}^{}\!\left(\mathrm{\bf  a}_{\scriptscriptstyle 0}^{\scriptscriptstyle (2)}\right)\right)} + 
\frac{5}{24}\frac{\left(\mathrm{\bf  a}_{\scriptscriptstyle 0}^{\scriptscriptstyle (3)}\right)^{\! 2}}
{\left(1 - \mathrm{\bf  a}_{\scriptscriptstyle 0}^{\scriptscriptstyle (2)}\right)^{\! 3}}
 +  \frac{1}{8}\frac{\mathrm{\bf  a}_{\scriptscriptstyle 0}^{\scriptscriptstyle (4)}}
{\left(1 - \mathrm{\bf  a}_{\scriptscriptstyle 0}^{\scriptscriptstyle (2)}\right)^{\! 2}}}_{\textrm{\textcolor{MyDarkRed}
{$\log \left(\int_{\mathbb{R}^{\infty}}^{\! *} \!e^{\mathcal{L}(\mathbf{p}, \tilde{\mathrm{{\bf t}}}, \hbar)} \, d\nu(\mathrm{{\bf t}}) \right) $}}}. 
\end{split}
\end{equation}

\vskip10pt
\begin{thm}\label{Genus2equiv-Euler-ch} --- With the above notation, we have
$
\mathrm{\bf  b}_{\scriptscriptstyle 2}^{} \!= \overline{\mathrm{\bf  b}}_{\scriptscriptstyle 2}^{} \circ  
\left(p_{\scriptscriptstyle 1}^{} + \, \mathrm{\bf  b}_{\scriptscriptstyle 0}^{\scriptscriptstyle (1)}\right).
$
\end{thm}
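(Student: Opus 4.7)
The plan is to extend the argument used for Theorem \ref{Getzler1} and Theorem \ref{Getzler2} by isolating the coefficient of $\hbar^{1}$ on both sides of the identity \eqref{eq: main identity}, then applying $\Psi^{-1}$ and finally composing on the right with $c_{1} = p_{1} - \mathrm{\bf a}_{0}^{(1)}$, so that Corollary \ref{expression of barc1,0} converts $\bar{c}_{1,0}$ into $p_{1} + \mathrm{\bf b}_{0}^{(1)}$. As in the lower genus cases, the three pieces on the right-hand side of \eqref{eq: main identity} contribute separately, and the annotations in \eqref{eq: contribution g = 2} indicate exactly which term of $\overline{\mathrm{\bf b}}_{2}$ arises from which piece.

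First I would expand $\mathcal{K}(\mathbf{p}, \bar{\mathbf{q}}, \hbar)$ to order $\hbar$. Writing $\bar{q}_{m} = \bar{c}_{m,0} + \hbar^{(m+1-\varepsilon_{m})/2} U_{m} + \cdots$ as in the proof of Theorem \ref{Getzler2}, the purely quadratic piece $-(\bar{q}_{m}-p_{m}-\varepsilon_{m}\hbar^{m/2})^{2}/2m\hbar^{m}$ together with $\psi_{m}(\mathrm{\bf a}_{0})(\bar{\mathbf{q}})/m\hbar^{m}$ produce the $\mathrm{\bf w}_{m}$ and $\mathrm{\bf v}_{m}$ combinations; here I would use Proposition \ref{unique-formal-sol-barq} together with the explicit form of $\bar{c}_{m,(m+1+\varepsilon_{m})/2}$ (which emerges by solving the next layer of the recursion \eqref{eq: system-determine-qn} and matches the stated formula $\bar{c}_{m,(m+1+\varepsilon_{m})/2} = \mathrm{\bf w}_{m} \circ \bar{c}_{1,0}$). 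The contributions from $\mathrm{\bf a}_{1}$ enter through $\Psi(\mathrm{\bf a}_{1}\hbar^{0})(\bar{\mathbf{q}})$ expanded around $\bar{C}(\mathbf{p})$, while $\mathrm{\bf a}_{2}$ appears as the constant term of $\Psi(\mathrm{\bf a}_{2}\hbar)(\bar{\mathbf{q}})$. These account for the three clusters in the first line of \eqref{eq: contribution g = 2} and the summed lines indexed by even and odd $m$.

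Next I would expand the logarithmic sum using \eqref{eq: tag basic-asymptotic}; the $\hbar^{1}$ coefficient has two origins: the $\hbar$-correction inside the parenthesis there (producing the $\mathrm{\bf a}_{1}^{(2)}$ and $\mathrm{\bf a}_{0}^{(0,2)}$ terms with the $m=1,2$ denominators), and the Taylor expansion to order $\hbar$ of $\psi_{m}(\mathrm{\bf a}_{0}^{(2)})(\bar{\mathbf{q}})$ around $\bar{C}(\mathbf{p})$ using the next Taylor coefficient of $\bar{q}_{m}$, which gives the $\mathrm{\bf w}_{1}\mathrm{\bf a}_{0}^{(3)}$ and $\mathrm{\bf v}_{2}\mathrm{\bf a}_{0}^{(2,1)}$ and $\mathrm{\bf v}_{2}\psi_{2}(\mathrm{\bf a}_{0}^{(3)})$ pieces. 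The Gaussian integral piece is already handled by the explicit first-order expansion stated in Theorem \ref{Main Theorem}; after replacing $\mathrm{ch}_{0} = \mathrm{\bf a}_{0}$ and evaluating at $\bar{\mathbf{q}}(\mathbf{p},0) = \bar{C}(\mathbf{p})$, this produces the last three summands of \eqref{eq: contribution g = 2}.

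Having assembled the full coefficient of $\hbar$ in $\Psi(T(\mathbb{M}\mathcal{V}))$, I would apply $\Psi^{-1} = \sum_{r\geq 1}\tfrac{\mu(r)}{r}\psi_{r}$ termwise. Each summand already has the form $F \circ \bar{c}_{1,0}$ for some symmetric function $F$, so after $\Psi^{-1}$ and summation via Möbius inversion (exactly as in the genus-$1$ proof, which accounted for $\varphi(m)/m$ factors) we obtain $\mathrm{\bf b}_{2} = \overline{\mathrm{\bf b}}_{2} \circ \bar{c}_{1,0}$. Invoking Corollary \ref{expression of barc1,0} identifies $\bar{c}_{1,0}$ with $p_{1} + \mathrm{\bf b}_{0}^{(1)}$ and yields the stated formula. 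The main obstacle will be the bookkeeping in the first two paragraphs above: one must carefully identify every way an $\hbar^{1}$ can arise from the product of a negative power of $\hbar$ in $\mathcal{K}$ with a subleading power in $\bar{q}_{m}$, and verify that the cross-terms organize precisely into the $\mathrm{\bf v}_{m}$ and $\mathrm{\bf w}_{m}$ building blocks used in \eqref{eq: contribution g = 2}; the Gaussian integral contribution, by contrast, is essentially already computed in Theorem \ref{Main Theorem}.
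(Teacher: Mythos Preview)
Your approach is correct and is essentially the one taken in the paper: extract the coefficient of $\hbar$ from the right-hand side of \eqref{eq: main identity} and verify, piece by piece, that the contributions match the summands of $\overline{\mathrm{\bf b}}_{2}$ as annotated in \eqref{eq: contribution g = 2}; the paper in fact only spells out the logarithmic contribution explicitly and declares the rest analogous.

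One point you should correct: contrary to what you anticipate from the genus-$1$ argument, the operation $\Psi^{-1}$ plays no role here (the paper remarks on this in a footnote). The coefficient of $\hbar^{1}$ in $\Psi(T(\mathbb{M}\mathcal{V})) = \sum_{k \ge 1}\sum_{g \ge 0} k^{-1}\hbar^{k(g-1)}\psi_{k}(\mathrm{\bf b}_{g})$ comes only from $(k,g)=(1,2)$, so it is already equal to $\mathrm{\bf b}_{2}$ itself. This is unlike the genus-$1$ case, where $k(g-1)=0$ holds for \emph{every} $k$ and the coefficient of $\hbar^{0}$ is $\Psi(\mathrm{\bf b}_{1})$, forcing the M\"obius inversion that produced the $\varphi(m)/m$ factors. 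In genus $2$ no such inversion occurs, and once you have assembled the coefficient of $\hbar$ on the right-hand side in the form $\overline{\mathrm{\bf b}}_{2} \circ \bar{c}_{1,0}$, you are done after invoking Corollary \ref{expression of barc1,0}.
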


\begin{proof} \!The assertion follows, as before, from Theorem \ref{Main Theorem} by computing the coefficient of $\hbar$ 
in the right-hand side of \eqref{eq: main identity}\footnote{The operation $\Psi^{-1}$ does not have any effect in this case.}. 
Each contribution to this coefficient corresponds to a piece of 
$\overline{\mathrm{\bf  b}}_{\scriptscriptstyle 2}^{},$ as indicated in \eqref{eq: contribution g = 2}. For instance, let us 
verify the contribution to $\mathrm{\bf  b}_{\scriptscriptstyle 2}^{}$ coming from 
\begin{equation*}
- \frac{1}{2}\sum_{m = 1}^{\infty}\log\left(1  -  m \hbar^{m}\frac{\partial^{2}
\Psi(T(\mathcal{V}))}{\partial q_{\scriptscriptstyle m}^{2}}(\bar{\mathbf{q}}, \hbar) \right). 
\end{equation*} 
By \eqref{eq: tag basic-asymptotic}, it suffices to compute the coefficient of $\hbar$ in
\begin{equation} \label{eq: log-genus-2-contrib-equiv}
-  \frac{1}{2}\sum_{m = 1}^{\infty}
\log\left(1 - \psi_{\scriptscriptstyle m}^{}\!
\left(\mathrm{\bf  a}_{\scriptscriptstyle 0}^{\scriptscriptstyle (2)}\right)\!(\bar{\mathbf{q}})\right)
\; +\;  \frac{\mathrm{\bf  a}_{\scriptscriptstyle 1}^{\scriptscriptstyle (2)}(\bar{\mathbf{q}})}
{1 - \mathrm{\bf  a}_{\scriptscriptstyle 0}^{\scriptscriptstyle (2)}(\bar{\mathbf{q}})} \cdot \frac{\hbar}{2}
\; + \, \frac{\mathrm{\bf  a}_{\scriptscriptstyle 0}^{\scriptscriptstyle (0,  2)}(\bar{\mathbf{q}})}
{1 - \psi_{\scriptscriptstyle 2}^{}\!\left(\mathrm{\bf  a}_{\scriptscriptstyle 0}^{\scriptscriptstyle (2)}\right)\!(\bar{\mathbf{q}})} 
\cdot \hbar.
\end{equation} 
The logarithmic terms do not contribute to the coefficient when $m\ge 3.$ If $m = 1$ we have 
\begin{equation*} 
\mathrm{\bf  a}_{\scriptscriptstyle 0}^{\scriptscriptstyle (2)}(\bar{\mathbf{q}})  =  
\left(\mathrm{\bf  a}_{\scriptscriptstyle 0}^{\scriptscriptstyle (2)} \circ \, \bar{c}_{\scriptscriptstyle 1,  0}^{}\right)\!(\mathbf{p})
 \, + \, \hbar\left(\left(\mathrm{\bf  w}_{\! \scriptscriptstyle 1}^{}\mathrm{\bf  a}_{\scriptscriptstyle 0}^{\scriptscriptstyle (3)} 
+\,  \mathrm{\bf  v}_{\! \scriptscriptstyle 2}^{}\mathrm{\bf  a}_{\scriptscriptstyle 0}^{\scriptscriptstyle (2,  1)}\right) 
\circ  \bar{c}_{\scriptscriptstyle 1,  0}^{}\right)\!(\mathbf{p})\, +\, O(\hbar^{2})
\end{equation*} 
and 
\begin{equation*} 
\psi_{\scriptscriptstyle 2}^{}\!\left(\mathrm{\bf  a}_{\scriptscriptstyle 0}^{\scriptscriptstyle (2)}\right)\!(\bar{\mathbf{q}}) =  
\left(\psi_{\scriptscriptstyle 2}^{}\!\left(\mathrm{\bf  a}_{\scriptscriptstyle 0}^{\scriptscriptstyle (2)}\right) \circ  
\bar{c}_{\scriptscriptstyle 1,  0}^{}\right)\!(\mathbf{p})
\, + \, \hbar\left(\mathrm{\bf  v}_{\! \scriptscriptstyle 2}^{}\psi_{\scriptscriptstyle 2}^{}
\!\left(\mathrm{\bf  a}_{\scriptscriptstyle 0}^{\scriptscriptstyle (3)}\right)
\circ  \bar{c}_{\scriptscriptstyle 1,  0}^{}\right)\!(\mathbf{p})\, +\, O(\hbar^{2})
\end{equation*} 
if $m = 2.$ Thus the coefficient of $\hbar$ in \eqref{eq: log-genus-2-contrib-equiv} is indeed 
\begin{equation*}
\left(\frac{1}{2}\left(\frac{\mathrm{\bf  w}_{\! \scriptscriptstyle 1}^{}
\mathrm{\bf  a}_{\scriptscriptstyle 0}^{\scriptscriptstyle (3)} \! +  
\mathrm{\bf  v}_{\! \scriptscriptstyle 2}^{}
\mathrm{\bf  a}_{\scriptscriptstyle 0}^{\scriptscriptstyle (2,  1)}}
{1 -  \mathrm{\bf a}_{\scriptscriptstyle 0}^{\scriptscriptstyle (2)}}  +  
\frac{\mathrm{\bf  v}_{\! \scriptscriptstyle 2}^{} 
\psi_{\scriptscriptstyle 2}^{}\!\left(\mathrm{\bf  a}_{\scriptscriptstyle 0}^{\scriptscriptstyle (3)}\right)}
{1  -   \psi_{\scriptscriptstyle 2}^{}\!\left(\mathrm{\bf  a}_{\scriptscriptstyle 0}^{\scriptscriptstyle (2)}\right)}
 +  \frac{\mathrm{\bf  a}_{\scriptscriptstyle 1}^{\scriptscriptstyle (2)}}
{1 -  \mathrm{\bf  a}_{\scriptscriptstyle 0}^{\scriptscriptstyle (2)}}\right)
\; + \, \frac{\mathrm{\bf  a}_{\scriptscriptstyle 0}^{\scriptscriptstyle (0,  2)}}
{1 -  \psi_{\scriptscriptstyle 2}^{}\!\left(\mathrm{\bf  a}_{\scriptscriptstyle 0}^{\scriptscriptstyle (2)}\right)}\right)
\,\circ \, \bar{c}_{\scriptscriptstyle 1,  0}^{}
\end{equation*} 
(evaluated at $\mathbf{p}$); we recall that 
$
\bar{c}_{\scriptscriptstyle 1,  0}^{} 
= p_{\scriptscriptstyle 1}^{} \!+ \mathrm{\bf  b}_{\scriptscriptstyle 0}^{\scriptscriptstyle (1)}. 
$ 

All the other contributions are computed similarly. 
\end{proof}

We note that using the above results, the generating series $\mathrm{\bf  b}_{\scriptscriptstyle 2}^{}$ can be effectively computed. Indeed, since 
$
\mathrm{\bf  a}_{\scriptscriptstyle 0}^{} = \mathrm{ch}_{\scriptscriptstyle 0}(\mathcal{V})
$ 
is known (see the remark preceding Proposition \ref{unique-formal-sol-barq}), it suffices to compute the generating series 
$
\mathrm{\bf  a}_{\scriptscriptstyle 1}^{}
$ 
and 
$
\mathrm{\bf  a}_{\scriptscriptstyle 2}^{}.
$ 
An expression in closed form for 
$
\mathrm{\bf  a}_{\scriptscriptstyle 1}^{}
$ 
can be read off directly from Getzler's formula \cite[Eq.~(5.5)]{Getz0} for the generating series of the $\mathbb{S}_{n}$-equivariant 
Serre characteristic of $\mathscr{M}_{1,  n}.$ To compute 
$
\mathrm{\bf  a}_{\scriptscriptstyle 2}^{},
$ 
let $\mathscr{A}_{2}$ denote the moduli space of principally polarized abelian surfaces. Via the Torelli map, we can view 
$\mathscr{M}_{2}$ as an open substack of $\mathscr{A}_{2},$ and set $\mathscr{A}_{1,1} = \mathscr{A}_{2} \setminus \mathscr{M}_{2}.$ 
For $\lambda = (\lambda_{1} \ge \lambda_{2} \ge 0),$ we have (see \cite{FC}) natural $\ell$-adic smooth \'etale sheaves 
$\mathbb{V}_{\lambda}$ on $\mathscr{A}_{2} \otimes \mathbb{Z}[1\slash \ell]$ corresponding to irreducible algebraic representations 
of $\mathrm{GSp}_{4}(\mathbb{Q}).$ From the results of \cite{Getz0} (or rather their $\ell$-adic realization), we know that computing 
the $\mathbb{S}_{n}$-equivariant Euler characteristic 
$ 
\mathrm{ch}_{\scriptscriptstyle n}(\mathcal{V}(\!\!(2, n)\!\!)\!) 
$ 
amounts to the same as computing the Euler characteristics 
\begin{equation*}
\mathbf{e}_{\scriptscriptstyle c}(\mathscr{X}, \mathbb{V}_{\lambda}) = 
\sum_{i}\, (- 1)^{i}[H^{\scriptscriptstyle i}_{\scriptscriptstyle c}
(\mathscr{X}_{\slash \overline{\mathbb{Q}}}, \mathbb{V}_{\lambda})]
\end{equation*} 
for $\mathscr{X} = \mathscr{A}_{2}$ and $\mathscr{X} = \mathscr{A}_{1,1},$ and all $\lambda$ with $\lambda_{1} + \lambda_{2} \le n.$ 
A formula for $\mathbf{e}_{\scriptscriptstyle c}(\mathscr{A}_{2}, \mathbb{V}_{\lambda})$ was conjectured by Faber and van der Geer in 
\cite{FvdG}. Their conjecture was proved by Weissauer \cite{Weiss} when $\lambda$ is regular, that is, $\lambda_{1} > \lambda_{2} > 0,$ 
and by Petersen \cite{Peter2} for arbitrary $\lambda.$ The characteristic $\mathbf{e}_{\scriptscriptstyle c}(\mathscr{A}_{1,1}, \mathbb{V}_{\lambda})$ 
can be computed, for instance, using the branching formula in \cite[Section 3]{Peter3}. Thus taking the trace of Frobenius one obtains 
a formula expressing $\mathrm{\bf  a}_{\scriptscriptstyle 2}^{}$ in terms of traces of Hecke operators on spaces of elliptic and genus $2$ 
vector-valued Siegel modular forms.

%


\end{document}